\numberwithin{equation}{section}
\numberwithin{figure}{section}
\newtheorem{theorem}{Theorem}
\newtheorem{lemma}{Lemma}
\newtheorem{proposition}{Proposition}
\newtheorem{corollary}{Corollary}
\renewcommand{\tilde}{\widetilde}
\newenvironment{condition}[1]
{ \begin{equation*}
	\mbox{\raisebox{.5pt}{\textcircled{\raisebox{-.9pt}{\textbf{#1}}} \quad}}}
{
	\end{equation*}
}
\begin{document}
\tikzstyle{block} = [draw,fill=blue!20,rectangle,minimum height=3em, minimum width=3em]
\tikzstyle{input} = [coordinate]
\tikzstyle{output} = [coordinate]
\tikzstyle{pinstyle} = [pin edge={->,thin,black,dashed}]
\tikzstyle{pinstyle2} = [pin edge={<-,thin,black,dashed}]
\pagestyle{fancy}
\thispagestyle{plain}
\title{The Dynamics of Vector-Borne Relapsing Diseases
}

\author{Cody Palmer         \and
Erin Landguth \and
Emily Stone
}

\maketitle

\begin{abstract}
	In this paper we describe the dynamics of a vector-borne relapsing disease, such as tick-borne relapsing fever, using the methods of compartmental models. 
	After some motivation, model description, and a brief overview
	of the theory of compartmental models, we compute a general form of the reproductive ratio $R_0$, which is the average number of new infections produced by a single infected individual.
	A disease free equilibrium undergoes a bifurcation at $R_0 =1$ and we show that for an arbitrary number of relapses it is a transcritical bifurcation with a single branch of
	endemic equilibria that is locally asymptotically stable for $R_0$ sufficiently close to 1.  We then show that these results can be extended to a variant of the model that allows
	for variation in the number of relapses before recovery.  We close with some discussion and directions for future research.
\end{abstract}

\section{Introduction}
\label{intro}
Many mathematical models dealing with the spread of infectious diseases show a rich variety of dynamics that arise from various nonlinear interactions or temporal forcing e.g. \cite{keeling2001seasonally}.
Vector-borne diseases are additionally complex with interactions between host and vector species \cite{dobson2006sacred}.
Tick-borne relapsing fever (TBRF) is an example
of a system that incorporates such complex interactions in a multiple host-vector community.

In North America, TBRF is caused by several species of spiral-shaped bacteria (\textit{Borrelia} spp.) that are transmitted to their hosts through the bite
of an infected vector, the soft ticks of the genus \textit{Ornithodoros}.  Once infected with the bacteria, ticks remain infectious for extended periods and possibly
for life \cite{johnsonPLOSNTD}.  Most human cases occur in the summer months and are often associated with sleeping in rustic cabins in mountainous areas of the Western United States
\cite{dworkin1998tick}.  The model presented in this paper
is motivated by a system located on Wild Horse Island, Flathead Lake, Lake County, Montana (WHI), where the presence of this pathogen has been
confirmed \cite{schwan2003tick}.  The island harbors two host species, the red squirrel (\textit{Tamiasciurus hudsonicus}) and the deer mouse (\textit{Peromyscus maniculatus}) and a single vector species
(\textit{O. hermsi}), which is thought to control the disease patterns on the island.  See \cite{johnsonPLOSNTD} for more
details.

Compartmental models, such as the SIR models with susceptible, infectious, and removed compartments, have been applied to many disease and disease-like systems in an effort to examine
system dynamics \cite{kermack1927contribution,feng2015stability, lin2016modeling,white2007heroin,okuneye2016analysis,xu2016voluntary,saad2016mathematical, guo2006global}.  
In these epidemic models, susceptible individuals pass into the infected class and then transition to the removed class.  For some diseases,
recovered individuals may relapse through a reactivation of infection and revert back to an infected class.  TBRF is a system in which relapse always occurs,
but between different infected classes caused by the bacteria's antigenic variation \cite{antigen}.  The advantage of antigenic variation is to extend the length of infection
so that the host will still be infected at the next interaction with a susceptible vector \cite{barry1991dynamics,gupta2005parasite}.  The question that we raise is how the dynamics of the
spread of the disease are affected by the number of relapses, and how do these dynamics differ from a vector-borne disease with no relapses.

Given a mathematical model for disease spread, the disease reproduction number, $R_0$, is an essential summary parameter.  It is defined as the average number of secondary infections produced
when one infected individual is introduced into a host population in which all individuals are susceptible \cite{dietz1975transmission}.  When $R_0 < 1$, the disease free equilibrium (DFE), at which the population
remains in the absence of disease, is locally asymptotically stable.  However, if $R_0>1$, then the DFE is unstable and invasion is always possible
\cite{hethcote2000mathematics}
and a new endemic equilibrium (EE) exists. 

 A key
assumption for the host-vector disease modeling is the definition of the transmission term, which represents the contact between hosts and vectors.  The formulation of the transmission
term directly affects the reproduction number $R_0$.  For host-vector disease models, the transmission term includes vector
biting rate $f$, which controls the disease transmission both from the vector-to-host and from the host-to-vector.  The TBRF model follows frequency-dependent transmission assumptions
through the biting rate, since a blood meal is required only once every three months regardless of the host population density.  Following this framework, it is reasonable
to assume that a host would experience an increasing number of bites as the vector population increased \cite{johnsonPLOSNTD}.  While our work here shares techniques with previous work 
done on staged progression models \cite{guo2006global, hyman1999differential, hyman2005reproductive}, the key difference is the addition of vectors.

In this paper we derive a general form for  $R_0$ when there is a single host species in the system, following the
methodology of van den Dreissche and Watmough for general compartmental disease models \cite{compart} which
is then extended with an arbitrary number of relapsing states.  From this
we show how $R_0$ depends on the number of relapses and the various parameters in the model.  We then classify the bifurcation at $R_0=1$, showing that it is transcritical with an exchange
of stability between a disease free equilibrium and an endemic equilibrium.  We also show that there is a unique endemic equilibrium for each value of $R_0 > 1$.  We finally consider
a variation of the model which accounts for differing number of relapses before recovery, and close with discussion and future work.

\section{Single Host Vector Model}
\label{sec:1}
To begin constructing the model we first make assumptions motivated by the spread of TBRF on WHI.  We assume that new infections only occur
when an infected vector bites a susceptible host or when a susceptible vector bites an infected host.  We also assume that when a vector becomes infected, it is infected for life.  Furthermore, we assume that the transmission terms are frequency dependent through the biting rate $f$.  The infected hosts relapse into infected compartments sequentially at a rate $\alpha_i$ and recover from
the disease at rate $\gamma$.  The total populations of hosts and vectors are assumed to remain constant are denoted
by $N$ and $\widetilde{N}$ respectively (throughout the
paper we will indicate quantities corresponding to the  vectors with a $\widetilde{}$ ).  Though the death rates may vary
among the different infected compartments, we will require that recruitment (reproduction) and death rates are equal.  In more precise terms,
we assume that the dynamics occur on an invariant hyperplane.  This assumption greatly simplifies the model, as will be seen shortly.
Although it appears at first to be a restrictive assumption, it is a natural for determining $R_0$, where it is often assumed that the disease is spreading among a population large enough that the net change is essentially zero.

The infection dynamics in a single host-vector system with $j-1$ relapsing rates for $j\geq 1$ infected compartments involve the
number of susceptible hosts $S(t)$, infectious hosts $I_k(t)$, removed hosts $R(t)$, susceptible vectors $\widetilde{S}(t)$, and infected vectors $\widetilde{I}(t)$,
where the total host population is $\displaystyle{N = S+\sum_{k=1}^j I_k + R}$ and the total vector population $\widetilde{N} = \widetilde{S} + \widetilde{I}$.  A conceptual
model for this scheme is given in Figure \ref{fig:1}.
\begin{figure}
	\centering
	\subfigure[0 relapses.]{\begin{tikzpicture}[auto, node distance=2cm,>=latex']
		\node [block,pin={[pinstyle,pin distance=.5cm]above:$\mu_s$},pin={[pinstyle2,pin distance=.5cm]left:$\beta(N)$}] (sh) {$S$};
		\node [block,right of=sh,pin={[pinstyle,pin distance=.5cm]above:$\mu_1$}] (ih1) {$I_1$};
		\node [block,right of=ih1,pin={[pinstyle,pin distance=.5cm]above:$\mu_r$}] (r) {$R$};

		\draw [draw,->] (sh) -- node[name=u] {} (ih1);
		\node [block,below of=u,pin={[pinstyle,pin distance=.5cm]below:$\widetilde{\mu_{i}}$}, node distance=2cm] (iv) {$\widetilde{I}$};
		\draw [draw,->] (iv) -- node {$fc_v$} (u);
			
		\draw [draw,->] (ih1) -- node[name=gam] {$\gamma$} (r);
		\node [block,below of=gam,pin={[pinstyle,pin distance=.5cm]below:$\widetilde{\mu_{s}}$}, pin={[pinstyle2,pin distance=.5cm]right:$\beta_v(\widetilde{N},N)$},
			node distance=2.1cm] (sv) {$\widetilde{S}$};
		\draw [draw,->] (sv) -- node[name=mid] {} (iv);
		\draw [draw,->] (ih1) -- node {$fc$} (mid);
	\end{tikzpicture}}

	\subfigure[1 relapse]{
	\begin{tikzpicture}[auto, node distance=2cm,>=latex']
		\node [block,pin={[pinstyle,pin distance=.5cm]above:$\mu_s$},pin={[pinstyle2,pin distance=.5cm]left:$\beta(N)$}] (sh) {$S$};
		\node [block,right of=sh,pin={[pinstyle,pin distance=.5cm]above:$\mu_1$}] (ih1) {$I_1$};
		\node [block,right of=ih1,pin={[pinstyle,pin distance=.5cm]above:$\mu_2$}] (ih2) {$I_2$};
		\node [block,right of=ih2,pin={[pinstyle,pin distance=.5cm]above:$\mu_r$}] (r) {$R$};

		\draw [draw,->] (sh) -- node[name=u] {} (ih1);
		\node [block,below of=u,pin={[pinstyle,pin distance=.5cm]below:$\widetilde{\mu_{i}}$}, node distance=2cm] (iv) {$\widetilde{I}$};
		\draw [draw,->] (iv) -- node {$fc_v$} (u);
		\draw [draw,->] (ih1) -- node[name=a] {$\alpha_1$} (ih2);
			
		\draw [draw,->] (ih2) -- node[name=gam] {$\gamma$} (r);
		\node [block,below of=gam,pin={[pinstyle,pin distance=.5cm]below:$\widetilde{\mu_{s}}$}, pin={[pinstyle2,pin distance=.5cm]right:$\beta_v(\widetilde{N},N)$},
			node distance=2.1cm] (sv) {$\widetilde{S}$};
			\draw [draw,->] (sv) -- node[name=mid1] {} (iv);
		\draw [draw,->] (ih1) -- node {$fc$} (mid);
		\node [below of=ih2] (blow) {};
		\draw [draw,->] (ih2) -- node {$fc$} (blow);
\end{tikzpicture}}

\subfigure[$j-1$ relapses]{
	\begin{tikzpicture}[auto, node distance=2cm,>=latex']
		\node [block,pin={[pinstyle,pin distance=.5cm]above:$\mu_s$},pin={[pinstyle2,pin distance=.5cm]left:$\beta(N)$}] (sh) {$S$};
		\node [block,right of=sh,pin={[pinstyle,pin distance=.5cm]above:$\mu_1$}] (ih1) {$I_1$};
		\node[right of=ih1] (bla) {$\ldots$};
		\node [block,right of=bla,pin={[pinstyle,pin distance=.5cm]above:$\mu_j$}] (ihj) {$I_j$};
		\node [block,right of=ihj,pin={[pinstyle,pin distance=.5cm]above:$\mu_r$}] (r) {$R$};

		\draw [draw,->] (sh) -- node[name=u] {} (ih1);
		\node [block,below of=u,pin={[pinstyle,pin distance=.5cm]below:$\widetilde{\mu_{i}}$}, node distance=2cm] (iv) {$\widetilde{I}$};
		\draw [draw,->] (iv) -- node {$fc_v$} (u);
		\draw [draw,->] (ih1) -- node[name=a] {$\alpha_1$} (ih2);
			
		\draw [draw,->] (ihj) -- node[name=gam] {$\gamma$} (r);
		\node [block,below of=gam,pin={[pinstyle,pin distance=.5cm]below:$\widetilde{\mu_{s}}$}, pin={[pinstyle2,pin distance=.5cm]right:$\beta_v(\widetilde{N},N)$},
			node distance=2.1cm] (sv) {$\widetilde{S}$};
			\draw [draw,->] (sv) -- node[name=mid1] {} (iv);
		\draw [draw,->] (ih1) -- node {$fc$} (mid);
		\node [below of=ihj] (blow) {};
		\draw [draw,->] (ihj) -- node {$fc$} (blow);
		\draw [draw,->] (bla) -- node {$\alpha_{j-1}$} (ihj);
\end{tikzpicture}}
\caption{Conceptual models for the cross-infection dynamics between a single host-vector system,
	which includes (a) no relapses between $j=1$ infected compartments, (b) 1 relapse between $j=2$ infected compartments, and (c)
	$j-1$ relapses between $j$ infected compartments.  Dashed lines are the vital rates for each population, where solid lines refer to
interaction rates between compartments.}
	\label{fig:1}
\end{figure}
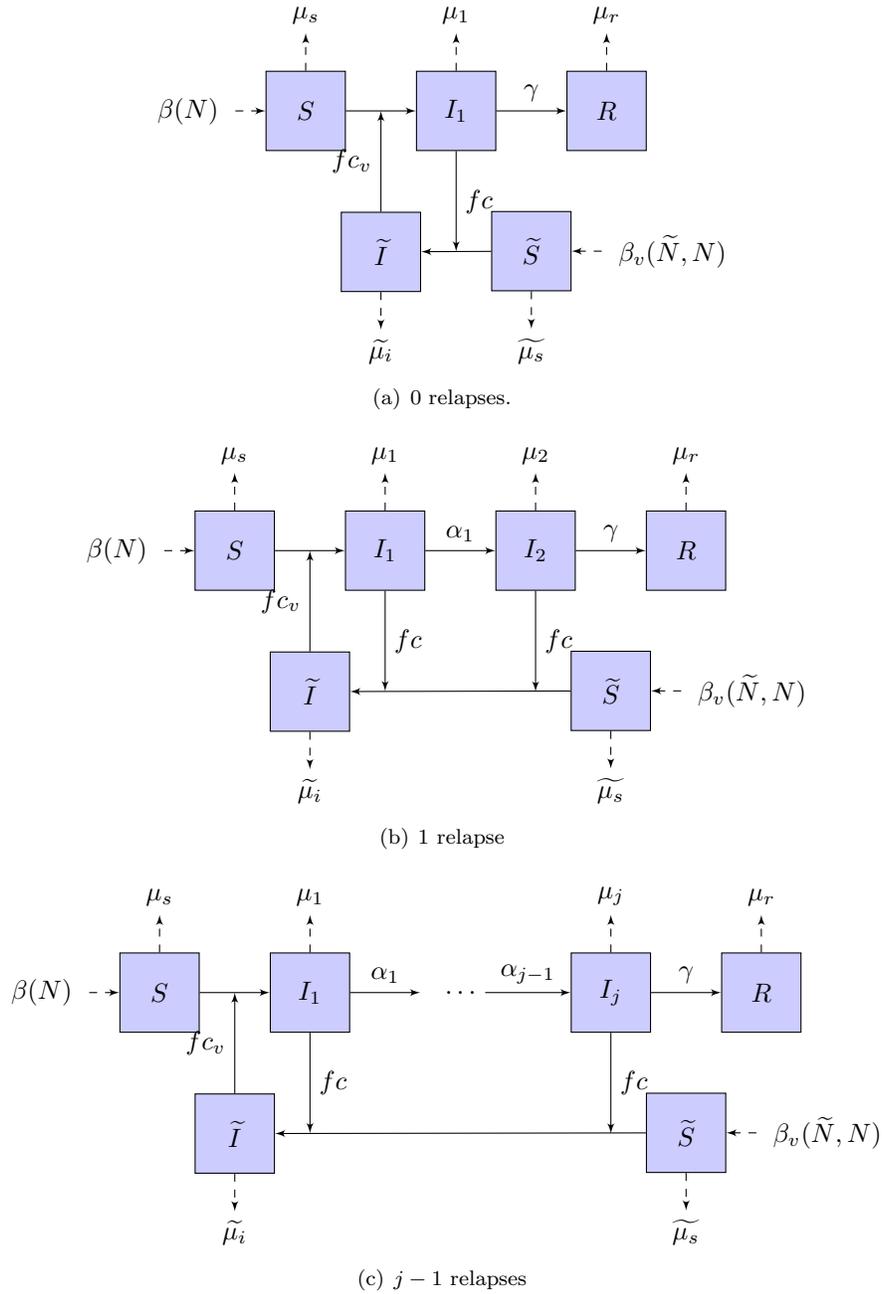
The equations for the model are as follows: first the host equations
\begin{equation}
	\begin{array}{ccl}
		S' &=& \beta(N) - fc_v\widetilde{I} \frac{S}{N} - \mu_sS,\\
		I'_1 &=& fc_v\widetilde{I} \frac{S}{N} - \alpha_1I_1 - \mu_1I_1,\\
		I'_2 &=& \alpha_1I_1-\alpha_2I_2 - \mu_2I_2,\\
		 & \vdots& \\
		I'_{j-1} &=& \alpha_{j-2}I_{j-2} - \alpha_{j-1}I_{j-1} - \mu_{j-1}I_{j-1},\\
		I'_j &=& \alpha_{j-1}I_{j-1} - \gamma I_j - \mu_jI_j,\\
		R' &=& \gamma I_j - \mu_rR,
	\end{array}
	\label{host}
\end{equation}
and the vector equations:
\begin{equation}
	\begin{array}{ccl}
		\widetilde{S}' &=& \beta_v(\widetilde{N},N) - \frac{fc\widetilde{S}}{N} \sum_{k=1}^j I_k - \widetilde{\mu}_s\widetilde{S},\\
		\widetilde{I}' &=& \frac{fc\widetilde{S}}{N} \sum_{k=1}^j I_k -\widetilde{\mu}\widetilde{I}.
	\end{array}
	\label{vec}
\end{equation}
\begin{table}[t]
	\begin{center}
\begin{tabular}{|c|c|c|}
\hline
Variable & Definition & Dimensionless Form\\
\hline
$N$ & Total host population. & $n$ \\
\hline
$\tilde{N}$ & Total vector population. & $\tilde{n}$ \\
\hline
$S$ & Susceptible hosts. & $s$\\
\hline
$I_k$ & Hosts in the $k$\textsuperscript{th} infected compartment. & $i_k$\\
\hline
$R$ & Recovered hosts. & $r$\\
\hline
$\tilde{S}$ & Susceptible vectors. & $\tilde{s}$\\
\hline
$\tilde{I}$ & Infected vectors. & $\tilde{i}$\\
\hline
\end{tabular}
\vskip.1in
\begin{tabular}{|c|c|c|}
\hline
Parameter & Definition & Dimensionless Form\\
\hline
$f$ & Biting Rate. & \\
\hline
$c_v$ & Vector Competency. & \\
\hline
$\mu_s$ & Susceptible Death Rate. & $b_s$\\
\hline
$\alpha_k$ & Transfer rate from the $k$\textsuperscript{th} infected compartment. & $q_k$\\
\hline
$\mu_k$ & Death rate in the $k$\textsuperscript{th} infected compartment. & $b_k$\\
\hline
$\gamma$ & Rate of recovery. & 1\\
\hline
$\mu_r$ & Death rate of recovered individuals. & $b_r$\\
\hline
$c$ & Host competency.& \\
\hline
$\tilde{\mu}_s$ & Death rate of susceptible vectors. & $\tilde{b}_s$\\
\hline
$\tilde{\mu}$ & Death rate of infected vectors. & $\tilde{b}$\\
\hline
	\end{tabular}

	\caption{Variables and parameters for the model, with dimensionless forms where relevant.  All parameters are positive.}
	\label{defs}
\end{center}
\end{table}
 The growth rates $\beta$ and $\beta_v$ are logistic, given as follows:
\[
	\beta(N) = \beta_1N - \left( \frac{ \beta_1 - \mu_s}{\overline{S}} \right)N^2,
\]
\[
	\beta_v(\widetilde{N},N) = \beta_{v1}\widetilde{N}N - \left( \frac{\beta_{v1}\overline{S}-\widetilde{\mu}_s}{\overline{S}_v} \right) \widetilde{N}_v^2.
\]
We insist that $\beta_1\geq \mu_s$ and $\beta_{v1} \geq \tilde{\mu}_s$, for these to be well defined.  Further details regarding this being a well defined model
are found in Appendix \ref{welldef}, where we show that the system satisfies the conditions laid out in \cite{compart}.
Here $\overline{S}$ and $\overline{S}_v$ are constants.  It is then easy to see that
\[
	(S,I_1,  \ldots, I_j, R,\widetilde{S}, \widetilde{I}) = (\overline{S}, 0, \ldots, \overline{S}_v, 0)
\]
is a fixed point of the system.  This is known as the Disease Free Equilibrium (DFE).  To investigate the stability of the DFE we calculate $R_0$ for arbitrary $j$.
We will make the natural assumptions in this model that $\mu_i \geq \mu_s$, $\mu_r \geq \mu_s$, and that $\tilde{\mu} \geq \tilde{\mu}_s$.  We are assuming
that having the disease will only serve to increase the death rate over the susceptible population.  This leads to the following result:
\begin{proposition}\label{eqdeath}
	The manifold $N = \overline{S}$ is invariant if and only if $\mu_s=\mu_i$ and
	$\mu_s = \mu_r$
\end{proposition}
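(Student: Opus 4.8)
The plan is to reduce the invariance of the manifold $\{N=\overline{S}\}$ to a pointwise condition on the total-population derivative $N'$, and then read off the parameter constraints. First I would add together all of the host equations in \eqref{host} to obtain an equation for $N'=S'+\sum_{k=1}^j I_k'+R'$. In that sum the transmission term $fc_v\widetilde{I}\,S/N$ appears with opposite signs in $S'$ and $I_1'$, the relapse terms $\alpha_k I_k$ telescope along the chain $I_1,\ldots,I_j$, and the recovery term $\gamma I_j$ cancels between $I_j'$ and $R'$. What survives is
\[
	N' = \beta(N) - \mu_s S - \sum_{k=1}^j \mu_k I_k - \mu_r R,
\]
so that the internal flow of the disease contributes nothing to the change in total population and only the vital birth/death terms remain.

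Next I would use the explicit form of $\beta$. A direct substitution gives $\beta(\overline{S}) = \beta_1\overline{S} - (\beta_1-\mu_s)\overline{S} = \mu_s\overline{S}$, so on the manifold $N=\overline{S}$ the birth term equals exactly $\mu_s\overline{S}$. Writing $\overline{S}=S+\sum_k I_k + R$, which holds precisely because we are on the manifold, and substituting yields
\[
	N'\big|_{N=\overline{S}} = \sum_{k=1}^j (\mu_s-\mu_k)I_k + (\mu_s-\mu_r)R .
\]
The manifold is invariant exactly when the vector field is tangent to it, i.e. when $N'=0$ at every point with $N=\overline{S}$, so the whole statement comes down to when this linear expression vanishes on the slice.

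Finally I would argue the equivalence. If $\mu_k=\mu_s$ for every infected compartment (this is the meaning of $\mu_s=\mu_i$) and $\mu_r=\mu_s$, then the right-hand side above is identically zero, $N'$ vanishes on the manifold, and the manifold is invariant. Conversely, if the manifold is invariant then the linear form $\sum_k(\mu_s-\mu_k)I_k + (\mu_s-\mu_r)R$ vanishes on the entire slice $\{(S,I_1,\ldots,I_j,R)\ge 0 : S+\sum_k I_k + R = \overline{S}\}$; since the coordinates $(I_1,\ldots,I_j,R)$ sweep out a set with nonempty interior there, with $S$ absorbing the slack, each coefficient must vanish, giving $\mu_k=\mu_s$ for all $k$ and $\mu_r=\mu_s$. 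The only step requiring genuine care is this last one: one must confirm that the $I_k$ and $R$ can be varied independently while staying on the manifold, so that a vanishing linear form really forces all coefficients to be zero. This is immediate because $\{N=\overline{S}\}$ is a single hyperplane of codimension one, but it is the logical crux of the ``only if'' direction and is where I would be most explicit.
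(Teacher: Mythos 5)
Your proof is correct, and its computational core is the same as the paper's: sum the host equations so that the transmission, relapse, and recovery terms cancel, evaluate $\beta(\overline{S})=\mu_s\overline{S}$, and use the constraint $S+\sum_k I_k+R=\overline{S}$ to reduce everything to a linear form in $(I_1,\dots,I_j,R)$. Where you genuinely differ is the finishing step of the ``only if'' direction. The paper leans on its standing assumptions $\mu_i\geq\mu_s$ and $\mu_r\geq\mu_s$: each term of $\sum_k\left(\frac{\mu_k}{\mu_s}-1\right)I_k+\left(\frac{\mu_r}{\mu_s}-1\right)R$ is then nonnegative, so each must vanish, and since $I_k$ and $R$ ``can, at some time, be nonzero'' the coefficients are forced to zero. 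You instead observe that invariance makes the linear form vanish at \emph{every} point of the slice, and since the admissible $(I_1,\dots,I_j,R)$ fill a simplex with nonempty interior in $\mathbb{R}^{j+1}$, a vanishing linear form must have all coefficients zero. Your version buys generality (it never uses the ordering of the death rates) and makes explicit the quantification over the whole slice that the paper leaves implicit in its appeal to a trajectory along which the compartments are nonzero. One point you should tighten: you treat ``invariant $\iff$ $N'=0$ on the slice'' as given. The direction needed for ``only if'' (invariance $\Rightarrow$ $N'=0$ pointwise) is immediate, but the converse, which your ``if'' direction uses, deserves a line of justification: either argue via uniqueness of solutions (a solution of the system restricted to the hyperplane, extended by $N\equiv\overline{S}$, solves the full system and so coincides with the given solution), or do as the paper does and note that with equal death rates $N$ obeys the autonomous scalar equation $N'=(\beta_1-\mu_s)N\left(1-\frac{N}{\overline{S}}\right)$, of which $\overline{S}$ is a fixed point, so $N(0)=\overline{S}$ forces $N(t)\equiv\overline{S}$.
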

\begin{proof}
First suppose that $\mu_s=\mu_i$ and $\mu_s = \mu_r$. Substituting this into the system and summing the equations yields
\[
	N' = \beta(N) - \mu_sN = \beta_1N - \frac{(\beta_1-\mu_s)N^2}{\overline{S}} - \mu_sN = (\beta_1 - \mu_s)N\left( 1- \frac{N}{\overline{S}}\right)
\]
From this we can see if $N(0) = \overline{S}$, then  $N(t) = \overline{S}$, and $N=\overline{S}$ is invariant.

Now assume that $N = \overline{S}$ is invariant.  Then if $N(0) = \overline{S}$,  $N$ is constant and the first equation reduces to
\[
	S' = \mu_s(\overline{S} - S) - \frac{fc_v\widetilde{I}S}{\overline{S}}
\]
Then, summing the equations gives that
\[
	N' = \mu_sS + \sum_{k=1}^j \mu_kI_k + \mu_rR - \mu_s \overline{S}
\]
But since $N$ is constant, $N' = 0$, and we use the fact that $s + \sum I_k + R = \overline{S}$ so the above equation becomes
\[
	0 = \sum \left( \frac{\mu_i}{\mu_s} - 1 \right)I_k + \left( \frac{\mu_r}{\mu_s} - 1\right)R
\]
Now, $I_k,R \geq 0$, and since $\mu_i \geq \mu_s$ and $\mu_r \geq \mu_s$ we have
\[
	\frac{\mu_i}{\mu_s} \geq 1  \mbox{ and } \frac{\mu_r}{\mu_s} \geq 1
\]
Since each term is nonnegative, the only way the sum can add to zero is if each term is zero.  Since $I_k$ and $R$ can, at some time, be nonzero
we must have $\frac{\mu_i}{\mu_s} - 1 = 0$ and $\frac{\mu_r}{\mu_s} - 1 = 0$.  Thus  $\mu_s = \mu_i = \mu_r$.
\end{proof}
When the death rates are equal the host population has simple logistic dynamics:
\[
N'= (\beta_1 - \mu_s)N\left( 1- \frac{N}{\overline{S}}\right)
\]
In particular, if $\beta_1 > \mu_s$ we have $N=\overline{S}$ is an attracting fixed point.
We can then apply the results from Chapter 2 of \cite{contnon}:
a fixed point that is asymptotically stable on the
manifold $N=\overline{S}$ is asymptotically stable in the off-manifold dynamics.  This will be relevant to
our discussion of endemic equilibria in Section \ref{bifsec}.  The equality of the death rates is equivalent to the invariance of the manifolds $N = \overline{S}$ and $\tilde{N} = \overline{S}_v$,
and restriction to these manifolds yields a simplification of the growth terms:
\[
	\beta(\overline{S}) = \mu_s\overline{S} \mbox{ and }   \beta_v(\overline{S}_v, \overline{S}) = \tilde{\mu}_s\overline{S}_v
\]

\section{$R_0$ for the single host-vector system with $j-1$ relapses.}
\label{sec:3}
\subsection{Dimensionless Form}
\label{3sec:1}
To ease some calculation we will put equations (\ref{host}) and (\ref{vec}) in dimensionless form.  Letting $\tau = \gamma t$, and scaling all the population variables by
the corresponding initial total populations $N(0)$ and $\widetilde{N}(0)$ gives the dimensionless form.
\[
	\frac{ds}{d\tau} = \frac{\beta(N)}{\gamma N(0)} - k\tilde{i} \frac{s}{n} - b_s s
\]
\[
	\frac{di_i}{d\tau} = k\tilde{i} \frac{s}{n} - q_1i_1 - b_{1}i_1
\]
\[
	\frac{di_2}{d\tau} = q_1i_1 - q_2i_2 - b_{2}i_2
\]
\[
	\vdots
\]
\[
	\frac{di_{j-1}}{d\tau} = q_{j-2}i_{j-2} - q_{j-1}i_{j-1} - b_{(j-1)}i_{j-1}
\]
\[
	\frac{di_j}{d\tau} = q_{j-1}i_{j-1} - i_j - b_{j}i_j
\]
\[
	\frac{dr}{d\tau} = i_j-b_rr
\]
\[
	\frac{d\tilde{s}}{d\tau} = \frac{\beta_v(\tilde{N}, N)}{\gamma \tilde{N}(0)} -\frac{l\tilde{s}}{n} \sum_{m=1}^j i_m -\tilde{b}_{s}s_v
\]
\[
	\frac{d\tilde{i}}{d\tau} = \frac{l\tilde{s}}{n} \sum_{m=1}^j i_m - \tilde{b}_{i} \tilde{i}
\]
where $k=fc_v/\gamma$ and $l=fc/\gamma$.
\subsection{General form for $R_0$}
\label{3sec:2}   Following \cite{compart}, we consider the reduced equations
\[
\frac{d}{d\tau} \begin{bmatrix} i_1\\ i_2\\ \vdots \\ i_{j-1}\\i_j\\\tilde{i} \end{bmatrix} =
\begin{bmatrix} k\tilde{i} \frac{s}{n}\\ 0\\ \vdots\\ 0\\0\\ \frac{l\tilde{s}}{n} \sum_{k=1}^j i_k \end{bmatrix}
	 - \begin{bmatrix} q_1i_1 + b_{1}i_1\\ -q_1i_1+q_2i_2 + b_{2}i_2\\ \vdots\\
		 -q_{j-2}i_{j-2} + q_{j-1}i_{j-1} + b_{j-1}i_{j-1}\\ -q_{j-1}i_{j-1} + i_j + b_{j}i_j\\
	 \tilde{b}_{i}\tilde{i} \end{bmatrix}
= \bm{w}-\bm{v}
 \]
 Next we take the Jacobian of $\bm{w}$ and $\bm{v}$ and evaluate them at the disease free equilibrium in order to find the matrices $W$ and $V$, i.e.
 \[
	 W = \left. \begin{bmatrix} 0 & \ldots & 0 & k\frac{s}{n}\\
	 			0& \ldots & 0 & 0\\
	 \vdots &  & \vdots & \vdots\\
	 \frac{l\tilde{s}}{n}& \ldots& \frac{l\tilde{s}}{n} & 0
 \end{bmatrix} \right|_{DFE}
 \]
 At the DFE we note that the entire host and vector populations are susceptible and there are no hosts in
 any of the relapse states.  This means that $n=s = \overline{s}$ and $\tilde{n} = \tilde{s}= \overline{s_v}$ (the carrying
 capacities of each population).  Hence
 \[
	 W = \begin{bmatrix} 0 & \ldots & 0 & k\\
	 			0& \ldots & 0 & 0\\
	 \vdots &  & \vdots & \vdots\\
	 \frac{l\overline{s_v}}{\overline{s}}& \ldots& \frac{l\overline{s_v}}{\overline{s}} & 0
 \end{bmatrix}
 \]
 For $V$, we note that the Jacobian is made up of constant values, namely
 \[
	V = \begin{bmatrix} q_1+b_{1} & 0 & 0 & \ldots & 0 & 0 & 0 &0\\
	 			-q_1 & q_2+b_{2} & 0 & \ldots& 0 & 0 & 0 &0\\
 				0 & -q_2 & q_3+b_{3}&\ldots& 0 & 0 & 0 &0\\
	 \vdots &\vdots &\vdots & & \vdots &\vdots &\vdots &\vdots \\
	 0&0&0& \ldots & -q_{j-2}& q_{j-1}+b_{j-1} & 0 & 0\\
	 0&0&0& \ldots & 0 & -q_{j-1} & 1+b_{j} & 0\\
	 0&0&0& \ldots & 0 & 0 & 0 & \tilde{b}_{i}
 \end{bmatrix}
 \]
 Clearly the matrix $V$ is invertible (lower triangular, nonzero diagonal elements), although computing
 such an inverse is nontrivial.  However, all we require is the dominant eigenvalue of $WV^{-1}$.  Because  $W$ is fairly sparse
 we will not need to know all the entries of $V^{-1}$ to extract it.  Also, note that $W$ and $V$ are both $(j+1) \times (j+1)$ matrices.

The action of $W$ on $V^{-1}$ is to multiply the last row by $k$, make middle rows 0, and sum the first
 $j$ elements of each columns and multiply it by the constant $\rho = \frac{l\overline{s_v}}{\overline{s}}$. Let
 us denote the elements of the last row of $V^{-1}$ by $\epsilon_k$ and the sums of the first $j$ elements of the
 $k$th column as $\delta_k$.   $WV^{-1}$ then has a relatively simple form
 \[
	 WV^{-1}  = \begin{bmatrix} k\epsilon_1 & \ldots & k\epsilon_{j+1}\\
					0 & \ldots & 0\\
	 				\vdots & & \vdots\\
	 				0 & \ldots & 0\\
	 				\rho \delta_1 & \ldots & \rho \delta_{j+1}
 \end{bmatrix}
 \]
From here we can move ahead with the eigenvalue calculation.  This involves computing the determinant of
\[
	WV^{-1} - \lambda I = \begin{bmatrix} k\epsilon_1 - \lambda & k\epsilon_2 & \ldots & k\epsilon_{j+1}\\
	0 & -\lambda & \ldots & 0\\
	\vdots & \vdots & & \vdots\\
	0 & 0 & \ldots & 0\\
	\rho \delta_1 & \rho \delta_2 & \ldots & \rho \delta_{j+1} - \lambda\\
\end{bmatrix}
\]
Expanding along the first column we have
\begin{multline*}
	\det(WV^{-1} - \lambda I) = (k\epsilon_1-\lambda) \cdot \det\begin{bmatrix}
		-\lambda & 0 & \ldots & 0\\
		\vdots & \vdots& \vdots &\vdots\\
	\rho \delta_2 & \rho \delta_3 & \ldots & \rho \delta_{j+1} - \lambda \end{bmatrix}\\
		+ (-1)^{j+2}\rho \delta_{1}\cdot \det
		\begin{bmatrix} k\epsilon_2 & \ldots & k\epsilon_j & k\epsilon_{j+1}\\
		-\lambda & \ldots & 0 & 0\\
		\vdots &  & \vdots &\vdots\\
	0 & \ldots &-\lambda & 0 \end{bmatrix}
\end{multline*}
Computing the determinants in this expression is straightforward, noting that both are $j \times j$ matrices and the first is a lower triangular
matrix, and thus the determinant is the product of the diagonal elements $(-\lambda)^{j-1}(\rho \delta_{j+1}-\lambda)$.  For the second
we expand along the last column to see that the determinant is
\[
	(-1)^{j+1} k\epsilon_{j+1} \cdot \det(-\lambda I) = (-1)^{j+1} k\epsilon_{j+1}(-\lambda)^{j-1}
\]
Hence
\begin{multline*}
	\det(WV^{-1} - \lambda I) = (k\epsilon_1-\lambda) (-\lambda)^{j-1}(\rho \delta_{j+1}-\lambda)
		\\+ (-1)^{j+2}\rho \delta_{1} (-1)^{j+1} k\epsilon_{j+1}(-\lambda)^{j-1}
\end{multline*}
Hence, the characteristic polynomial only involves $\epsilon_1$, $\epsilon_{j+1}$, $\delta_1$ and $\delta_{j+1}$,  and
we need only know the first and last column of $V^{-1}$.  These can be computed by looking at the first and last row of
the cofactor matrix. For $\epsilon_1$ we look at the minor of $v_{j+1,1}$, and note the top row is all 0's, so that the minor,
and thus the cofactor are $0$ and thus $\epsilon_1 = 0$.  To find $\delta_{j+1}$ we must find the
cofactors of of the first $j$ elements on the bottom row of $V$, but when the bottom row is eliminated to compute the cofactor, the
last column is all 0's, and hence each of the cofactors is 0.  Thus $\delta_{j+1} = 0$.  The characteristic polynomial
then reduces to
\[
	(-\lambda)^{j-1} \lambda^2 - (-\lambda)^{j-1} \rho k \delta_1 \epsilon_{j+1}
\]
Setting this equal to 0 and factoring:
\[
	\lambda = \pm \sqrt{\rho k \delta_1 \epsilon_{j+1}}
\]
We have reduced finding the largest magnitude
eigenvalue to computing the last element of the last column of $V^{-1}$ ($\epsilon_{j+1}$) and the sum of the first $j$ elements in the first
column of $V^{-1}$ ($\delta_1$).  The elements of $V^{-1}$ are not difficult to find, see \cite{guo2006global}.

If we define $q_0=1$ and $q_j=1$ then
\[
\delta_1 = \sum_{k=1}^j \frac{\prod_{\ell=0}^{k-1} q_\ell}{\prod_{\ell=1}^k (q_\ell + b_{\ell})}
 = \sum_{k=1}^j \prod_{\ell=1}^k \frac{q_{\ell+1}}{q_\ell + b_\ell}
\]
We can rewrite this sum as
\[\scalebox{.9}{$\displaystyle{
\delta_1 =\frac{1}{q_1 + b_{1}}\left( 1+ \frac{q_1}{q_2 + b_{2}}\left( 1+  \frac{q_2}{q_3 + b_{3}}\left( 1+ \ldots
\frac{q_{j-2}}{q_{j-1} + b_{j-1}}\left( 1 + \frac{q_{j-1}}{1 + b_{j}} \right) \ldots \right)\right)\right)}$}
\]
This gives that
\[\scalebox{.85}{$\displaystyle{
	R_0 = \sqrt{\frac{\rho k}{\tilde{b}_{i}} \frac{1}{q_1 + b_{1}}\left( 1+ \frac{q_1}{q_2 + b_{2}}\left( 1+  \frac{q_2}{q_3 + b_{3}}\left( 1+ \ldots
\frac{q_{j-2}}{q_{j-1} + b_{j-1}}\left( 1 + \frac{q_{j-1}}{1 + b_{j}} \right) \ldots \right)\right)\right)}}$}
\]
Recall $\rho=\frac{l\overline{s_v}}{\overline{s}}$.  Moving out of dimensionless form, we find
\[\scalebox{.79}{$\displaystyle{
			R_0 = f\sqrt{\frac{cc_v \overline{S}_v}{\tilde{\mu}\overline{S}} \frac{1}{\alpha_1 + \mu_{1}}\left( 1+ \frac{\alpha_1}{\alpha_2 + \mu_{2}}
					\left( 1+  \frac{\alpha_2}{\alpha_3 + \mu_{3}}\left( 1+ \ldots
	\frac{\alpha_{j-2}}{\alpha_{j-1} + \mu_{j-1}}\left( 1 + \frac{\alpha_{j-1}}{\gamma + \mu_{j}} \right) \ldots \right)\right)\right)}}$}
\]
This is the form that was conjectured in \cite{johnsonPLOSNTD}.  Alternatively,
\[
R_0= f \sqrt{\frac{cc_v\overline{S}_v}{\tilde{\mu}\overline{S}} \sum_{k=1}^j \prod_{l=1}^k \frac{\alpha_{l-1}}{\alpha_l + \mu_l} }
\]
where $\alpha_0=1$ and $\alpha_j=\gamma$.  This form best displays the dependence of $R_0$ on the number of relapses, $j-1$.  This is our first step in
quantifying how the number of relapses affect the spread of a vector-borne disease.

Notice that $R_0$ is directly proportional to the biting rate $f$, the roots of the competencies $c$ and $c_v$, and the
root of the carrying capacity of the vector population.  $R_0$ is inversely proportional to the death rate of the vectors
$\tilde{\mu}$ and the carrying capacity of the host population $\overline{S}$.  Thus $R_0$ can be completely controlled
by these parameters.  For instance, if the biting rate or the competencies are set to 0, the disease will eventually be elimnated from the population.
Similarly, the higher the death rate of the vectors, the smaller $R_0$ will be, inhibiting the spread of the disease.
It is also worth noting that $R_0$ only depends on the ratio of the vector and host populations
sizes only.  If the ratio of the populations is the same, $R_0$ is the same.  

To investigate how $R_0$ depends on the transfer rates $\alpha_i$, consider a model with  
$\mu_i = 0$.  Then, letting $\beta = f \sqrt{\frac{cc_v\overline{S}_v}{\mu\overline{S}}}$, we have
\[
	R_0^2 = \beta^2 \left( \frac{1}{\alpha_1} \left( 1 + \frac{\alpha_1}{\alpha_2} \left( 1 + \frac{\alpha_2}
				{\alpha_3}\left( 1 +\ldots \frac{\alpha_{j-2}}{\alpha_{j-1}}\left( 1+  \frac{\alpha_{j-1}}{
		\alpha_j}\right)\ldots \right)\right) \right) \right)
\]
\[
	 = \beta^2 \left( \frac{1}{\alpha_1} + \frac{1}{\alpha_2} \left( 1 + \frac{\alpha_2}
			 {\alpha_3}\left( 1 +\ldots \frac{\alpha_{j-2}}{\alpha_{j-1}}\left( 1+  \frac{\alpha_{j-1}}{
		\alpha_j}\right)\ldots \right)\right) \right) 
\]
\[
	= \beta^2 \left( \frac{1}{\alpha_1} + \frac{1}{\alpha_2}  + \frac{1}
		{\alpha_3}\left( 1 +\ldots \frac{\alpha_{j-2}}{\alpha_{j-1}}\left( 1+  \frac{\alpha_{j-1}}{
		\alpha_j}\right)\ldots \right)\right)  
\]
and so on, until
\[
	R_0^2 = \beta^2 \left( \frac{1}{\alpha_1} + \frac{1}{\alpha_2} + \frac{1}{\alpha_3} + \ldots + \frac{1}{\alpha_{j-1}}
			 + \frac{1}{\alpha_j} \right)
\]
Let $T_i$ be the average amount of time spent in the $i$th compartment, then $T_i \propto \frac{1}{\alpha_i}$, so that
\[
	R_0^2  \propto \beta^2 \sum T_i.
\]
As the rates increase, the $T_i$ will decrease and thus $R_0$ will decrease as well.  Also, we can add relapses
to the model but keep $R_0$ the same by fixing the total amount of time spent in the relapsing states.  Thus, we can conclude that the addition
of relapses to the model increases $R_0$ by increasing the amount of time that an infected host spends
being infectious.

\section{The Bifurcation at $R_0 = 1$.}
\label{bifsec}
We have already shown that there exists a bifurcation in the vector-borne relapsing disease model by finding a finite $R_0$.  We wish to learn more about the bifurcation.
To do this we shall take advantage of center manifold theory.  An introduction to such theory is written elsewhere \cite{wiggins2003introduction, carrcenter}
, but for our purposes it is enough to note that
if the zero eigenvalue of the linear term is simple, then the dynamics on the center manifold are one dimensional.
It is clear then that bifurcations with one simple zero eigenvalue are much simpler to study.  We use this fact to investigate the stability
of the EE near $R_0 = 1$.  To do this, we insert a parameter $\mu$ into the equations where
\[
	\mu = 0 \iff R_0=1.
\]
Define $\mu = R_0-1$ and so, solving for the biting rate:
\[
	f = \frac{\mu+1}{\sqrt{\frac{cc_v \overline{S}_v}{\tilde{\mu}\overline{S}} \frac{1}{\alpha_1 + \mu_{1}}\left( 1+ \frac{\alpha_1}{\alpha_2 + \mu_{2}}
					\left( 1+  \frac{\alpha_2}{\alpha_3 + \mu_{3}}\left( 1+ \ldots
	\frac{\alpha_{j-2}}{\alpha_{j-1} + \mu_{j-1}}\left( 1 + \frac{\alpha_{j-1}}{\gamma + \mu_{j}} \right) \ldots \right)\right)\right)}}.
\]
All other parameters  being constant, we will abbreviate this as $f = \frac{\mu+1}{\zeta}$.
The Jacobian of the system at the DFE can be written in block form
\[
\begin{pmatrix}F-V & 0\\ -J_3 & -J_4 \end{pmatrix}.
\]
Let $v$ and $w$ be the left and right eigenvectors corresponding to the eigenvalue 0.  Without loss of generality we can choose these such that $vw=1$.
In the proof of Lemma 3 in \cite{compart} we see that we can also say that $v_1, w_i \geq 0$ for $1 \leq i \leq m$.
Define
\[
	a = \sum_{i,j,k = 1}^m v_iw_jw_k \left( \frac{1}{2} \frac{\partial^2 f_i}{\partial x_j \partial x_k}(x_0,0) + \sum_{l=m+1}^n \varepsilon_{lk}
	\frac{\partial^2 f_i}{\partial x_j \partial x_l}(x_0,0) \right),
\]
\[
	b = \sum_{i,j=1}^n v_iw_j \frac{\partial^2 f_i}{\partial x_j \partial\mu}(x_0,0),
\]
where $\varepsilon_{lk}$, $l=m+1, \ldots, n$, $k=1, \ldots, m$ are the $(l-m,k)$ entries of $-J_4^{-1}J_3$, when $R_0 = 1$.
The following theorem is found in  \cite{compart}
\begin{theorem}
	\label{transcb}
	In a disease transmission model satisfying  conditions 1-5,
	with the parameter $\mu$ as described above, with zero as a simple eigenvalue, and $b\neq 0$,
there is a $\delta > 0$ such that
\begin{itemize}
	\item if $a<0$ then there are locally asymptotically stable endemic equilibria near $x_0$ for $0 < \mu < \delta$
	\item if $a>0$, then there are unstable endemic equilibria near $x_0$ for $-\delta < \mu <0$.
\end{itemize}
\end{theorem}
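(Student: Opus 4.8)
The plan is to prove this as the center manifold bifurcation result of \cite{compart}, reducing the full flow near $(x_0,0)$ to a scalar equation on a one-dimensional center manifold and reading off the bifurcation from its normal form. First I would append the trivial equation $\mu'=0$, so that $\mu$ becomes a state variable and $(x_0,0)$ is an equilibrium of the augmented system. The hypothesis that $x_0$ is a disease free equilibrium for every $\mu$ gives $f(x_0,\mu)\equiv 0$, so all pure $\mu$-derivatives of $f$ at $x_0$ vanish; the block-triangular Jacobian $\begin{pmatrix}F-V & 0\\ -J_3 & -J_4\end{pmatrix}$ together with the stability of $-J_4$ shows that the single zero eigenvalue of the full Jacobian is inherited from $F-V$, and the assumption that it is simple is exactly what makes the center subspace one-dimensional while all remaining eigenvalues stay in the open left half-plane.

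Next I would invoke the center manifold theorem to produce a locally invariant manifold tangent to $\mathrm{span}(w)$ and the $\mu$-axis, writing the stable (slaved) coordinates as a graph over the center coordinate $c$ and over $\mu$. The structural fact driving the computation is that the slaved components of the right eigenvector are forced by $-J_3 w_c - J_4 w_s = 0$, i.e. $w_s = -J_4^{-1}J_3 w_c$, which is precisely what the entries $\varepsilon_{lk}=(-J_4^{-1}J_3)_{l-m,k}$ record, while dually $v_s J_4 = 0$ forces $v_s=0$, so the left eigenvector $v$ is supported only on the infected components. Projecting the flow onto the center direction with $v$ and Taylor expanding $f$ to second order in $(c,\mu)$, the linear term drops out because $vA=0$ and the $\mu^2$ term drops out because $f(x_0,\mu)\equiv 0$, leaving $c' = a c^2 + b\mu c + O(3)$. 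Since the manifold is tangent to $w$, moving a distance $c$ along it displaces the slaved coordinate $x_l$ by $w_l c = \sum_k \varepsilon_{lk} w_k\, c$ to leading order, so contracting the Hessian of $f$ reproduces exactly the two terms in the defined coefficient $a$ (the direct part $\tfrac12 v_i w_j w_k \partial^2_{jk}f_i$ plus the slaved correction $\sum_l \varepsilon_{lk}$), and the mixed derivatives reproduce $b$.

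Finally I would analyze the scalar normal form. Factoring $c' = c(ac + b\mu)$ exhibits the transcritical structure: $c=0$ is the DFE branch and $c^\ast = -b\mu/a$ is the unique endemic branch. The linearization of $c'$ has multiplier $b\mu$ at the DFE and $-b\mu$ at $c^\ast$, so the two branches always exchange stability. Using $b>0$, which holds here from the nonnegativity of $v$ and $w$ recorded in the proof of Lemma 3 of \cite{compart} together with the fact that raising $\mu$ raises the transmission entries, the case $a<0$ places $c^\ast>0$ for $\mu>0$ with multiplier $-b\mu<0$, hence locally asymptotically stable, whereas $a>0$ places $c^\ast>0$ for $\mu<0$ with multiplier $-b\mu>0$, hence unstable; choosing $\delta$ small enough that the $O(3)$ terms cannot alter these signs yields the two stated conclusions.

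The step I expect to be the main obstacle is the second-order reduction: justifying the center manifold rigorously and then extracting the quadratic coefficient $a$ correctly, since one must track not only the direct Hessian of $f$ but also the linear variation of the slaved variables along the manifold encoded by the $\varepsilon_{lk}$, and confirm that the contributions dropped from $a$ (second derivatives in two slaved directions, and genuinely cubic corrections) do not enter at this order. The remaining technical point the reduction quietly relies on is that zero is a \emph{simple} eigenvalue, equivalently that the spectral radius of the next-generation matrix $FV^{-1}$ crosses $1$ transversally and simply, which rests on the M-matrix and Perron--Frobenius structure of $V$ and $F$.
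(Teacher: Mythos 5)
Your proposal reconstructs a proof of a theorem that this paper never actually proves: Theorem \ref{transcb} is imported verbatim from \cite{compart} (Theorem 4 there), and the paper's own contribution is only to verify its hypotheses for the specific model --- simplicity of the zero eigenvalue in Appendix \ref{simpproof} and the signs of $a$ and $b$ in Appendix \ref{lem1proof}. So there is no in-paper proof to compare against; measured against the proof in the cited reference, your outline is essentially the same argument and is structurally sound: append $\mu'=0$, use $f(x_0,\mu)\equiv 0$ to kill the pure $\mu$-terms, get a center manifold tangent to $\mathrm{span}(w)$ and the $\mu$-axis from simplicity of the zero eigenvalue and stability of $-J_4$, note $v_s=0$ and $w_s=-J_4^{-1}J_3w_c$ (exactly what the $\varepsilon_{lk}$ encode), and reduce to $c'=ac^2+b\mu c+O(3)$, whose transcritical normal form yields both bullets. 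The obstacle you flag about ``dropped'' Hessian terms resolves cleanly: for $i\le m$, conditions 2 and 4 force $f_i$ to vanish identically on the disease-free states, so all second derivatives of $f_i$ in two uninfected directions are zero at $x_0$ and nothing is missing from $a$.

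The one step you should tighten is the sign of $b$. The theorem assumes only $b\neq 0$, yet your normal-form analysis genuinely needs $b>0$: if $b<0$, the nonzero branch sits at $c^\ast=-b\mu/a<0$ for the stated ranges of $\mu$ and the stability conclusions flip, so the statement as worded is false without $b>0$. You derive $b>0$ from nonnegativity of $v,w$ together with ``raising $\mu$ raises the transmission entries.'' That is true for this paper's model (where $f=(\mu+1)/\zeta$ multiplies every transmission term), but it is not a hypothesis of the general theorem you are proving, so as a proof of the theorem this step has a gap. The argument that works at the theorem's level of generality is spectral: $b=s'(0)$, where $s(\mu)$ is the continuation of the simple zero eigenvalue of $D_xf(x_0,\mu)$; since the setup ties $\mu$ to $R_0$ so that $R_0<1$ for $\mu<0$ and $R_0>1$ for $\mu>0$, the DFE is stable on one side and unstable on the other, hence $s'(0)\ge 0$, and $b\neq 0$ then upgrades this to $b>0$. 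Separately, your closing remark that simplicity of the zero eigenvalue ``rests on the M-matrix and Perron--Frobenius structure'' overstates matters: simplicity is an independent hypothesis (it can fail, e.g.\ for reducible systems), and in this paper it is checked for the model by the direct characteristic-polynomial computation of Appendix \ref{simpproof}, not deduced from general structure.
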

The proof that $0$ is a simple eigenvalue is found in Appendix \ref{simpproof} and the proof for the following Lemma is found in Appendix \ref{lem1proof}.
\begin{lemma}
	$b \neq 0$, $a>0$.
\end{lemma}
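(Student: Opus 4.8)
The plan is to apply the Castillo--Chavez--Song formulas for $a$ and $b$ verbatim, so the whole argument reduces to three ingredients: the left and right null vectors of $F-V$ at $R_0=1$, the slaving coefficients $\varepsilon_{lk}$, and the short list of second partials of the vector field that survive at the DFE. First I would compute the right null vector $w$ of $F-V$. Writing $(F-V)w=0$ row by row gives the forward recursion $w_1=\frac{k}{q_1+b_1}w_{j+1}$ and $w_i=\frac{q_{i-1}}{q_i+b_i}w_{i-1}$ for $2\le i\le j$ (reading $q_j+b_j$ as $1+b_j$), closed by $w_{j+1}=\frac{\rho}{\tilde{b}_i}\sum_{m=1}^j w_m$; the closure is precisely $\rho k\delta_1/\tilde{b}_i=1$, i.e. $R_0=1$, so a solution exists and every $w_i>0$. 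Since the full Jacobian is block lower triangular, the left null vector $v$ is supported on the infected coordinates, and back-substitution ($v_j=\frac{\rho}{1+b_j}v_{j+1}$, $v_1=\frac{\tilde{b}_i}{k}v_{j+1}$, and so on) again yields $v_i>0$. I normalise so that $vw=1$.

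For $b$ this is almost immediate. The bifurcation parameter enters only through the biting rate $f=(\mu+1)/\zeta$, which multiplies both transmission coefficients $k$ and $l$ linearly; hence $\frac{\partial^2 f_i}{\partial x_{j'}\partial\mu}$ vanishes except on the two transmission terms, where (at the DFE, $\mu=0$) $\frac{\partial^2 i_1'}{\partial\tilde{i}\,\partial\mu}=k$ and $\frac{\partial^2 \tilde{i}'}{\partial i_p\,\partial\mu}=\rho$ for $1\le p\le j$. Because $v$ is supported on the infected block, only $i=1$ and $i=j+1$ contribute, giving
\[ b=v_1 w_{j+1}\,k+v_{j+1}\,\rho\sum_{p=1}^j w_p. \]
Every factor is positive, so $b>0$ and in particular $b\neq 0$.

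The computation of $a$ is the substantive part. I would first obtain the slaving matrix $\varepsilon=-J_4^{-1}J_3$ from the uninfected equations, and the cleanest route is to restrict to the invariant manifolds $N=\overline{s}$, $\tilde{N}=\overline{s_v}$ (justified by Proposition~\ref{eqdeath} and the on-manifold/off-manifold stability remark following it), so that the only genuinely uninfected variable is $R$. Then $r'=i_j-b_r r$ forces $\varepsilon_{r,i_j}=1/b_r$ with all other $\varepsilon_{r,k}=0$, while $s=\overline{s}-\sum_k i_k-r$ and $\tilde{s}=\overline{s_v}-\tilde{i}$. The two transmission terms become $\frac{k}{\overline{s}}\tilde{i}(\overline{s}-\sum_k i_k-r)$ and $\frac{l}{\overline{s}}(\overline{s_v}-\tilde{i})\sum_m i_m$, and differentiating twice at the DFE leaves only $\frac{\partial^2 i_1'}{\partial\tilde{i}\,\partial i_m}=\frac{\partial^2 i_1'}{\partial\tilde{i}\,\partial r}=-k/\overline{s}$ and $\frac{\partial^2 \tilde{i}'}{\partial\tilde{i}\,\partial i_m}=-l/\overline{s}$. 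Feeding these together with $v$, $w$, and $\varepsilon_{r,i_j}$ into the $a$-formula collapses the triple sum to a short combination of $v_1$, $v_{j+1}$, the $w_m$, and the constants $k,l,\overline{s},b_r$, in which the single $\varepsilon$-weighted (mixed) term and the pure infected--infected terms all appear.

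The main obstacle is pinning down the sign of this combination. Because the transmission is frequency dependent, each surviving second derivative records the depletion of the susceptible pools and is therefore negative, and the slaving contribution enters through $\varepsilon_{r,i_j}>0$ with the same orientation; so the delicate point is to keep every mixed and pure term bookkept correctly and to establish that, under the paper's conventions, the assembled weighted sum has the asserted sign. I would cross-check the outcome against the bifurcation picture promised in the abstract: since the claimed conclusion is a forward transcritical bifurcation whose endemic branch is asymptotically stable just above $R_0=1$, the sign of $a$ must be exactly the one that Theorem~\ref{transcb} pairs with $b>0$ to produce that stable branch, and I would make sure the sign convention used to define $a$ is consistent with that reading before concluding. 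Finally, Theorem~\ref{transcb} also requires that $0$ be a simple eigenvalue of $F-V$ at $R_0=1$, which I would take from Appendix~\ref{simpproof}.
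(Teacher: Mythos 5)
Your handling of $b$ is sound and essentially the paper's own argument: since $J_4$ is invertible, the left null vector $v$ is supported on the infected block; the only $\mu$-dependence is through $f=(\mu+1)/\zeta$; and the formula collapses to $b=v_1w_{j+1}k+v_{j+1}\rho\sum_{p}w_p$. Your explicit recursions for $v$ and $w$ give strict positivity of every component, which is in fact cleaner than the paper's contradiction argument (the paper only shows that some $w_i$ and one of $v_1,v_{j+1}$ are nonzero, then invokes nonnegativity from Lemma 3 of \cite{compart}).

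The genuine gap is in $a$: you never establish its sign, even though your own setup already contains everything needed. After restricting to the invariant manifolds, every surviving second derivative you list, $\frac{\partial^2 i_1'}{\partial\tilde i\,\partial i_m}=\frac{\partial^2 i_1'}{\partial\tilde i\,\partial r}=-k/\overline{s}$ and $\frac{\partial^2\tilde i'}{\partial\tilde i\,\partial i_m}=-l/\overline{s}$, is negative; the slaving coefficient $\varepsilon_{r,i_j}=1/b_r$ is positive; and $v_1,v_{j+1},w_1,\ldots,w_{j+1}$ are strictly positive by your recursions. Hence every term in the sum defining $a$ is a product of nonnegative weights with a negative derivative, so $a<0$ \emph{immediately} — no delicate bookkeeping remains. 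Instead of drawing this conclusion, you propose to read the sign of $a$ off the "bifurcation picture promised in the abstract." That is circular: the lemma exists precisely to supply the hypotheses of Theorem \ref{transcb}, from which the stable endemic branch is then deduced; you cannot use the advertised conclusion to fix the sign of the coefficient that produces it. (Your route to the second derivatives differs slightly from the paper's, which keeps $S,\widetilde{S},R$ as uninfected variables and picks up the transmission terms as mixed derivatives weighted by $\varepsilon_{1(j+1)}$ and $\varepsilon_{2k}$, but both decompositions yield all-negative contributions and the same conclusion.)

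Your hesitation is understandable, because the lemma as printed says $a>0$, which contradicts the paper itself: the proof in Appendix \ref{lem1proof} concludes $a<0$, the later corollary for the removal model repeats $a<0$, and Theorem \ref{transcb} requires $a<0$ together with $b\neq 0$ to give locally asymptotically stable endemic equilibria for $0<\mu<\delta$, which is exactly what the text claims after the lemma. The "$a>0$" in the statement is a sign typo. The correct resolution is to finish your computation, assert $a<0$, and flag the typo — not to leave the sign to be settled by consistency with the abstract.
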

We can then apply Theorem 1 of \cite{compart} to our system, which states
that there are asymptotically stable equilibria near the DFE when $R_0$ is sufficiently close to, but greater than 1.
\section{Existence of Endemic Equilibria for all $R_0$}
Having established the existence of branch of stable endemic equilibria (EE) near the bifurcation,
there is a natural question regarding the behavior of these EE outside of the
neighborhood of the bifurcation.  Specifically, how does the local branch found in the previous section extends for larger (or smaller) values of $R_0$.
The complex form of $R_0$ and the arbitrary number of equations appear to make this problem quite difficult.
But the majority of the equations have a simple linear form, and from these we can derive a simple recurrence relation for nontrivial equilibrium
values of the $I_2$ through $I_{j+1}$ in terms of $I_1$.  What remains is a fixed number of equations to solve.  We thus make the following proposition
\begin{proposition}
	Given any $I_1 \geq0$ there exist unique values $S, \widetilde{S}, R, I_2, \ldots, I_j,$ and $\widetilde{I}$ such that
	\[
		S'=\widetilde{S}' = R' = I_2' = \ldots = I_j' = \widetilde{I}' = 0.
	\]
\end{proposition}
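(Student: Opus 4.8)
The plan is to exploit the near-triangular structure of the equilibrium conditions: once $I_1$ is fixed, every remaining variable can be solved for one at a time, so that existence and uniqueness reduce to a cascade of scalar solves. I would process the equations in the order infected compartments $\to$ recovered class $\to$ vector pair $\to$ susceptible host, and check at the end that there is no circular dependence.

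First I would handle the intermediate infected equations. Setting $I_k' = 0$ gives $(\alpha_k + \mu_k)I_k = \alpha_{k-1}I_{k-1}$ for $2 \le k \le j-1$ and $(\gamma + \mu_j)I_j = \alpha_{j-1}I_{j-1}$ for $k=j$. Since every coefficient is positive, each equation determines $I_k$ uniquely from $I_{k-1}$, so forward substitution from the given $I_1$ yields the explicit product
\[
I_k = I_1 \prod_{\ell=2}^{k} \frac{\alpha_{\ell-1}}{\alpha_\ell + \mu_\ell}
\]
(reading $\alpha_j$ as $\gamma$), a unique nonnegative multiple of $I_1$. The equation $R' = 0$ then gives $R = \gamma I_j/\mu_r$, again unique and nonnegative. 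At this point $\Sigma := \sum_{k=1}^{j} I_k$ is a known nonnegative number and none of these steps has needed anything beyond linearity.

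What remains are the three equations $\widetilde{S}' = 0$, $\widetilde{I}' = 0$, $S' = 0$, a fixed number independent of $j$. Here I would invoke the death-rate equalities of Proposition \ref{eqdeath}, i.e. restrict to the invariant hyperplanes $N = \overline{S}$ and $\widetilde{N} = \overline{S}_v$, so that the logistic growth terms collapse to the constants $\beta(\overline{S}) = \mu_s\overline{S}$ and $\beta_v(\overline{S}_v,\overline{S}) = \widetilde{\mu}_s\overline{S}_v$ and the total populations in the transmission denominators become $\overline{S}$ and $\overline{S}_v$. Each of the three equations is then linear in a single unknown and they decouple in a definite order: $\widetilde{S}' = 0$ reads $\widetilde{\mu}_s(\overline{S}_v - \widetilde{S}) = (fc\Sigma/\overline{S})\widetilde{S}$ and fixes $\widetilde{S}$ uniquely and positively; $\widetilde{I}' = 0$ gives $\widetilde{I} = (fc\Sigma/\widetilde{\mu}\,\overline{S})\widetilde{S} \ge 0$; and $S' = 0$ reads $\mu_s(\overline{S} - S) = (fc_v\widetilde{I}/\overline{S})S$, with the unique positive solution $S = \mu_s\overline{S}^2/(\mu_s\overline{S} + fc_v\widetilde{I})$. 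The dependency chain $I_1 \to I_2,\dots,I_j \to R \to \Sigma \to \widetilde{S} \to \widetilde{I} \to S$ is acyclic, so the full system is solved uniquely and nonnegatively.

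The main obstacle is exactly this last block. In the unsimplified model the susceptible host equation couples to the vector equations through $N = S + \sum I_k + R$, while the vector growth term couples back through $\widetilde{N}$, so the three relations form a genuinely nonlinear fixed-point problem rather than a triangular cascade. The step I would take most care over is therefore the reduction that breaks this coupling: the death-rate equality (equivalently, working on the invariant hyperplanes) is what linearizes the growth and transmission terms, after which uniqueness follows because each scalar equation carries a strictly positive coefficient on its unknown, and nonnegativity follows from positivity of all parameters together with $I_1 \ge 0$. Should one wish to avoid the manifold reduction, the same conclusion can be reached directly: solve $\widetilde{I}$ as a strictly increasing function of $\widetilde{S}$ from $\widetilde{I}'=0$, substitute into $\widetilde{S}'=0$ and then into $S'=0$, and apply a monotonicity and intermediate-value argument to the resulting one-dimensional problems.
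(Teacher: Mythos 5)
Your proposal is correct and follows essentially the same route as the paper: fix $I_1$, solve the infected and recovered compartments by forward substitution, restrict to the invariant manifolds $N=\overline{S}$, $\widetilde{N}=\overline{S}_v$ so the remaining three equations become a triangular cascade of scalar linear solves for $\widetilde{S}$, $\widetilde{I}$, and $S$. The only (immaterial) difference is that you obtain $\widetilde{I}$ directly from $\widetilde{I}'=0$, whereas the paper sets $\widetilde{I}=\overline{S}_v-\widetilde{S}$ from the population constraint and then verifies $\widetilde{I}'=0$ using $\widetilde{\mu}=\widetilde{\mu}_s$; the two yield the same value.
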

\begin{proof}
	Let $I_1$ be fixed.  Consulting the equations for $I_2$ through $I_{j-1}$ we see that
\[
	I_k' = 0 \iff I_k = \frac{\alpha_{k-1}I_{k-1}}{\alpha_k+\mu_k} = c_{k-1}I_{k-1} \mbox{ for } 2 \leq k \leq j-1.
\]
For $I_j$ 
\[
	I_j'=0 \iff I_j = \frac{\alpha_{j-1}I_{j-1}}{\gamma+\mu_j}  =  c_{j-1}I_{j-1},
\]
and
\[
	R'=0 \iff R = \frac{\gamma I_j}{\mu_r} = c_jI_j.
\]
Then for $I_2, \ldots,I_j, R$ there is a simple multiplicative recurrence relation which is solved easily for $I_k$ and $R$, namely
\[
	I_k = c_{k-1} \ldots c_1I_1;\; \quad R = c_k \ldots c_1 I_1.
\]
Thus the unique steady states for $I_2, \ldots I_j$ and $R$ are determined uniquely by $I_1$. Now observe that at a steady state, inserting $c_0=1$,
\[
	\sum_{k=1}^j I_k = \sum_{k=1}^j c_{k-1}\ldots c_0I_1.
\]
Let $\xi = \sum_{k=1}^j c_{k-1}\ldots c_0$ and $\sum_{k=1}^j I_k = \xi I_1$.
Then we have 
\[
	\widetilde{S}' = 0 \iff \widetilde{\mu}_s(\overline{S}_v - \widetilde{S}) - \frac{fc\widetilde{S}}{\overline{S}}\xi I_1 = 0,
\]
and thus
\[
	\widetilde{S} = \frac{\widetilde{\mu}_s\overline{S}_v}{\widetilde{\mu}_s + \frac{fcI_1}{\overline{S}} \xi}.
\]
So then $\widetilde{S}$ is uniquely determined by $I_1$.  Now, since $\widetilde{I}  =\overline{S}_v - \widetilde{S} $, we should confirm that this value gives $\widetilde{I}'=0$ as follows.
\[
	\widetilde{I}' = \frac{fc\widetilde{S}I_1 \xi}{\overline{S}} - \widetilde{\mu}(\overline{S}_v - \widetilde{S})
	= \widetilde{S} \left(\frac{fcI_1\xi}{\overline{S}} + \widetilde{\mu} \right) - \widetilde{\mu}_s \overline{S}_v,
\]
but since $\widetilde{\mu} = \widetilde{\mu}_s$, and given the equilibrium value for $\widetilde{S}$ we have
\[
	\widetilde{S} \left(\frac{fcI_1\xi}{\overline{S}} + \widetilde{\mu}_s \right) - \widetilde{\mu}_s \overline{S}_v = \widetilde{\mu}_s \overline{S}_v -\widetilde{\mu}_s \overline{S}_v
	 = 0,
\]
and $I_1$ uniquely determines the equilibrium value for $\widetilde{I}$.
Lastly we have 
\[
	S' = 0 \iff S = \frac{\mu_s\overline{S}}{\mu_s + \frac{fc_v\widetilde{I}}{\overline{S}}},
\]
and as $\widetilde{I}$ is uniquely determined by $I_1$, so is $S$.
\end{proof}
The consequence of this proposition is that the number of equilibrium points is determined by the number of values of $I_1$ such that $I_1' = 0$.  Furthermore notice that  $I_1 = 0$
implies that $S=\widetilde{S} = R = I_2 = \ldots = I_j = \widetilde{I} = 0$, and we have generated the DFE. So we will only be looking for values such that $I_1 > 0$.  We will also want
to see how these values depend on $R_0$, so it is useful to note the following form
\[
	R_0 = f \sqrt{\frac{cc_v\overline{S}_v}{\overline{S}\widetilde{\mu}} \frac{1}{\alpha_1+\mu_1} \xi}.
\]
Now, $I_1'=0$ if and only if
\[
	I_1(\alpha_1 + \mu_1) = \frac{fc_v\widetilde{I}S}{\overline{S}}.
\]
But since we also have $S'=0$ this leads to
\[
	\frac{fc_v\widetilde{I}S}{\overline{S}} = \mu_s(\overline{S} - S).
\]
Thus
\[
	I_1 = \frac{\mu_s(\overline{S} - S)}{\alpha_1 + \mu_1} = \frac{\mu_s\overline{S}}{\alpha_1 + \mu_1} \left( 1-\frac{\mu_s}{\mu_s + \frac{fc_v\widetilde{I}}{\overline{S}}} \right)
	= \frac{\mu_s\overline{S}}{\alpha_1 + \mu_1} \left(\frac{fc_v\widetilde{I}}{\mu_s\overline{S} + fc_v\widetilde{I}}\right).
\]
Now, at an equilibria, we can write $\widetilde{I}$ uniquely in terms of $I_1$:
\[
	\widetilde{I} = \overline{S}_v - \widetilde{S} = \overline{S}_v - \frac{\widetilde{\mu}_s\overline{S}_v}{\widetilde{\mu}_s + \frac{fcI_1}{\overline{S}} \xi}
	= \overline{S}_v\left( 1 - \frac{\widetilde{\mu}_s}{\widetilde{\mu}_s + \frac{fcI_1}{\overline{S}} \xi} \right)
	 =  \frac{\overline{S}_vfcI_1\xi}{\overline{S}\widetilde{\mu}_s + fcI_1\xi}.
\]
Substituting this back into the expression for $I_1$ gives
\[
	I_1 = \frac{\mu_s\overline{S}}{\alpha_1 + \mu_1} \left(\frac{\overline{S}_vf^2cc_vI_1\xi}{\overline{S}\mu_s(\overline{S}\widetilde{\mu}_s + fcI_1\xi) + \overline{S}_vf^2cc_vI_1\xi} \right).
\]
Because we are interested in solutions where $I_1\neq 0$, we can divide both sides of the equation by $I_1$ and rearrange
\[
	\overline{S}^2\mu_s\widetilde{\mu}_s + fc\overline{S}\mu_s\xi I_1 + \overline{S}_vf^2cc_v\xi I_1 = \frac{\mu_s\overline{S}\,\overline{S}_vf^2cc_v\xi}{\alpha_1+\mu_1}.
\]
Thus
\[
	I_1 = \frac{1}{fc\overline{S}\mu_s\xi + \overline{S}_vf^2cc_v\xi} \left(\frac{\mu_s\overline{S}\,\overline{S}_vf^2cc_v\xi}{\alpha_1+\mu_1}  - \overline{S}^2\mu_s\widetilde{\mu}_s\right)
\]
\[
	= \frac{\overline{S}^2\mu_s\widetilde{\mu_s}}{fc\overline{S}\mu_s\xi + \overline{S}_vf^2cc_v\xi}\left(
	\frac{\overline{S}_vf^2cc_v\xi}{\widetilde{\mu}_s\overline{S}(\alpha_1+\mu_1)}  - 1\right),
\]
and since $\widetilde{\mu}_s = \widetilde{\mu}$ this can be rewritten
\[
	I_1 = \frac{\overline{S}^2\mu_s\widetilde{\mu_s}(R_0^2-1)}{fc\overline{S}\mu_s\xi + \overline{S}_vf^2cc_v\xi}.
\]
Thus we only have a nonzero equilibrium when $R_0 \neq 1$, and when $R_0<1$ the nonzero equilibrium is negative, and when $R_0 >1$ there is one endemic equilibrium.  The content of the above
discussion is contained in the following theorem:
\begin{theorem}
	Given a simple vector-borne relapsing disease model \ref{host}, \ref{vec}, for every value of $R_0>1$ there is exactly one nonzero EE.  For $R_0$ sufficiently close to 1, this EE is locally asymptotically stable.
\end{theorem}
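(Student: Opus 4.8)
The plan is to prove the two assertions separately, since they call for different tools: the count of endemic equilibria is a global algebraic fact valid for all $R_0 > 1$, whereas the stability is a local statement that I will inherit from the bifurcation analysis of Section~\ref{bifsec}.

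For the existence and uniqueness claim I would begin from Proposition~2, which guarantees that every equilibrium is completely determined by the single value $I_1$: the linear relapse equations produce the multiplicative recurrence $I_k = c_{k-1}\cdots c_1 I_1$, and $R$, $\widetilde{S}$, $\widetilde{I}$ and $S$ are then pinned down in turn. Hence counting nonzero endemic equilibria reduces to counting admissible values $I_1 > 0$ with $I_1' = 0$. Imposing $S' = 0$ and $\widetilde{S}' = 0$ together with the equilibrium expression for $\widetilde{I}$ in terms of $I_1$, I would substitute everything into $I_1(\alpha_1 + \mu_1) = fc_v\widetilde{I}S/\overline{S}$ and clear denominators. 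Dividing through by $I_1$ (legitimate since we seek $I_1 \neq 0$) collapses the relation to a \emph{linear} equation in $I_1$ with strictly positive coefficient $fc\overline{S}\mu_s\xi + \overline{S}_v f^2 cc_v\xi$, giving the closed form
\[
    I_1 = \frac{\overline{S}^2\mu_s\widetilde{\mu}_s\,(R_0^2-1)}{fc\overline{S}\mu_s\xi + \overline{S}_v f^2 cc_v\xi}.
\]
Because $\xi > 0$ (a sum of products of positive ratios) and every parameter is positive, the right-hand side has the sign of $R_0^2-1$; as $R_0 > 0$ this is positive exactly when $R_0 > 1$. A linear equation has at most one root, so for each $R_0 > 1$ this is the unique admissible $I_1$, and by Proposition~2 it determines a unique nonzero EE. The cases $R_0 = 1$ (giving $I_1 = 0$, the DFE) and $R_0 < 1$ (giving $I_1 < 0$, unphysical) confirm there is no other nonzero equilibrium.

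For the stability claim I would defer to the analysis already carried out in Section~\ref{bifsec}. There, writing $\mu = R_0 - 1$ so that $\mu = 0 \iff R_0 = 1$, the linearization at the DFE is placed in block form, zero is shown to be a simple eigenvalue (Appendix~\ref{simpproof}), and the bifurcation quantities $a$ and $b$ of Theorem~\ref{transcb} are evaluated, with the Lemma supplying $b \neq 0$. Applying Theorem~\ref{transcb} (Theorem~1 of \cite{compart}) identifies the bifurcation at $R_0 = 1$ as transcritical with an exchange of stability, so the endemic branch — which by the first part exists precisely for $R_0 > 1$ — is locally asymptotically stable for $R_0$ sufficiently close to $1$. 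The only point left to check is that the branch obtained there by the center-manifold reduction coincides with the one parametrized above by the formula for $I_1$; this is immediate, since both families of equilibria are smooth in $\mu$ and collapse onto the DFE as $\mu \to 0^+$.

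The main obstacle is conceptual rather than computational: the two halves of the theorem are genuinely independent. The explicit formula for $I_1$ yields existence and uniqueness for \emph{all} $R_0 > 1$, but says nothing about stability away from the bifurcation, while the center-manifold argument delivers stability only on a shrinking neighborhood $0 < R_0 - 1 < \delta$. Upgrading to global asymptotic stability of the unique EE for every $R_0 > 1$ would require a separate device (a Lyapunov function, or a monotone-system/uniform-persistence argument) and is not attempted here. The one delicate step within the present reach is verifying the positivity of $\xi$ and of the coefficient multiplying $I_1$, so that the sign of the endemic $I_1$ is governed entirely by $R_0^2 - 1$; once that is secured, the count follows purely from the linearity of the reduced equilibrium equation.
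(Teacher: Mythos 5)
Your proposal is correct and follows essentially the same route as the paper: the counting argument is the paper's own reduction via Proposition~2 to a linear equation in $I_1$, culminating in the identical closed form $I_1 = \overline{S}^2\mu_s\widetilde{\mu}_s(R_0^2-1)\big/\left(fc\overline{S}\mu_s\xi + \overline{S}_v f^2 cc_v\xi\right)$, and the local stability claim is inherited from the Section~\ref{bifsec} center-manifold/bifurcation analysis (simple zero eigenvalue, $b\neq 0$, Theorem~\ref{transcb}) exactly as in the text. Your extra remark that the center-manifold branch coincides with the explicitly parametrized one is left implicit in the paper, and in fact follows immediately from the uniqueness you establish in the first part, so nothing further is needed.
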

The form of the EE is complicated so evaluating the Jacobian at the EE is a difficult problem, and thus so is determining their stability away from $R_0=1$. 
The most efficient way to determine the stability of the EE is with Lyapunov
functions (LF), though the standard LF for vector-borne diseases with no relapses in \cite{timescale} did not yield any results. 
The LF in \cite{guo2006global} could potentially be extended to this case.

\section{Removal to the Recovered Compartment}
In TBRF there can be some variation in the number of relapses experienced by infected hosts.  We introduce this variation
into our model by allowing individuals to leave an infected compartment and go directly to the recovered state.  This can also correspond to treatment of the
disease at that particular compartment.
Let $\theta_i \geq 0$ be the transfer rate to the recovered compartment out of the $i$th infected compartment.  
The equations
change only slightly: the $\mu_i$ get replaced with $\mu_i + \theta_i$ and the recovered equation changes.
\begin{equation}
	\begin{array}{ccl}
		S' &=& \beta(N) - fc_v\widetilde{I} \frac{S}{N} - \mu_sS,\\
		I'_1 &=& fc_v\widetilde{I} \frac{S}{N} - \alpha_1I_1 - (\mu_1 + \theta_1)I_1,\\
		I'_2 &=& \alpha_1I_1-\alpha_2I_2 - (\mu_2 +\theta_2)I_2,\\
		 & \vdots& \\
		I'_{j-1} &=& \alpha_{j-2}I_{j-2} - \alpha_{j-1}I_{j-1} - (\mu_{j-1} + \theta_{j-1})I_{j-1},\\
		I'_j &=& \alpha_{j-1}I_{j-1} - \gamma I_j - \mu_jI_j,\\
		R' &=& \sum_{i=1}^{j-1} \theta_iI_i + \gamma I_j - \mu_rR.
	\end{array}
	\label{hostre}
\end{equation}
The vector equations remain unchanged:
\begin{equation}
	\begin{array}{ccl}
		\widetilde{S}' &=& \beta_v(\widetilde{N},N) - \frac{fc\widetilde{S}}{N} \sum_{k=1}^j I_k - \widetilde{\mu}_s\widetilde{S},\\
		\widetilde{I}' &=& \frac{fc\widetilde{S}}{N} \sum_{k=1}^j I_k -\widetilde{\mu}\widetilde{I}. 
	\end{array}
	\label{vecre}
\end{equation}
Under the assumption that the population is constant ($N=\overline{S}$) we sum these equations to get
\[
	0 = N'=\beta(\overline{S}) -\sum_{i=1}^{j-1} \mu_iI_i - \gamma I_j - \mu_rR,
\]
and following the proof of Proposition \ref{eqdeath} in the Appendix we get that all the $\mu_i = \mu_r=\mu_s$.  Similarly,
if we assume $\mu_i = \mu_r=\mu_s$, then we have that $N=\overline{S}$ is invariant.

It is straightforward to show that this system satisfies the necessary conditions for the use of the next generation method. 
\begin{proposition} The system \eqref{hostre}, \eqref{vecre} satisfies Conditions 1-5.
\end{proposition}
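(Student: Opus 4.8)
The plan is to follow exactly the route used in Appendix \ref{welldef} for the original system \eqref{host}, \eqref{vec}, exploiting the fact that the modification is minimal: passing from \eqref{host}, \eqref{vec} to \eqref{hostre}, \eqref{vecre} leaves the new-infection terms untouched and only (i) enlarges the outflow from each infected compartment $I_i$ from $\mu_i$ to $\mu_i+\theta_i$ and (ii) adds the nonnegative inflows $\sum_{i=1}^{j-1}\theta_iI_i$ to the recovered equation. First I would fix the decomposition $f=\mathcal{F}-\mathcal{V}$ demanded by \cite{compart}, taking as infected compartments $I_1,\dots,I_j,\widetilde{I}$ and placing in $\mathcal{F}$ only the genuinely new-infection terms $fc_v\widetilde{I}\,S/N$ (in the $I_1$ slot) and $\frac{fc\widetilde{S}}{N}\sum_k I_k$ (in the $\widetilde{I}$ slot), with every transfer, relapse, recovery, and death term collected into $\mathcal{V}=\mathcal{V}^--\mathcal{V}^+$. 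Since $\mathcal{F}$ is identical to that of the original model, Conditions 3 and 4 (that $\mathcal{F}_i=0$ for the uninfected compartments, and that $\mathcal{F}$ and the inflow $\mathcal{V}^+$ into the infected compartments vanish on the disease-free set) hold verbatim.

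For Conditions 1 and 2 I would simply observe that the new outflow coefficients $\alpha_i+\mu_i+\theta_i$ are still positive because $\theta_i\ge 0$, so every component of $\mathcal{V}^-$ remains nonnegative on the nonnegative orthant and still carries $I_i$ as a factor, hence vanishes when $I_i=0$; the extra recovered inflows $\theta_iI_i$ are manifestly nonnegative and feed the uninfected compartment $R$, so they enter $\mathcal{V}^+$ without disturbing anything. Thus Conditions 1 and 2 are inherited from the original verification with no change beyond noting the sign of $\theta_i$.

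The only condition requiring genuine (if light) checking is Condition 5: with $\mathcal{F}$ switched off, the Jacobian at the DFE becomes the block lower-triangular matrix $\begin{pmatrix}-V & 0\\ -J_3 & -J_4\end{pmatrix}$, so its spectrum is that of $-V$ together with that of $-J_4$. The transition matrix $V$ for \eqref{hostre} is obtained from the $V$ of Section \ref{3sec:2} by adding the nonnegative dimensionless counterpart of $\theta_\ell$ to each diagonal entry $q_\ell+b_\ell$; it remains lower triangular with strictly positive diagonal, so its eigenvalues are exactly those positive diagonal entries and $-V$ is stable. Crucially, the derivatives of the new $\theta_iI_i$ terms are taken with respect to the infected variables $I_i$, so they land in the coupling block $J_3$ and leave $J_4$ — the Jacobian of the $S,R,\widetilde{S}$ dynamics — exactly as in the original model, whose stability was already established in Appendix \ref{welldef} using $\beta_1\ge\mu_s$, $\beta_{v1}\ge\widetilde{\mu}_s$, and Proposition \ref{eqdeath}. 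Hence $-J_4$ is stable as well and Condition 5 follows.

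I expect the main (modest) obstacle to be bookkeeping rather than mathematics: one must confirm that every $\theta_i$-contribution sorts into the correct slot of the $\mathcal{F}/\mathcal{V}$ and $J_3/J_4$ decompositions, so that the nonnegativity of $\mathcal{V}^\pm$ and the block-triangular stability structure are genuinely preserved. Once this sorting is verified, Conditions 1--5 reduce to the original checks of Appendix \ref{welldef} under the harmless replacement $\mu_i\mapsto\mu_i+\theta_i$, which completes the proof.
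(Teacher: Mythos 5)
Your proposal is correct and takes essentially the same approach as the paper: the paper's entire proof is the remark that ``the proof is done in the exact same way as in Appendix \ref{welldef},'' and your verification --- the unchanged $\mathcal{F}$ giving Conditions 3--4, the still-positive outflow coefficients $\alpha_i+\mu_i+\theta_i$ and the nonnegative inflows $\theta_i I_i$ into the uninfected compartment $R$ giving Conditions 1--2, and the preserved lower-triangular/block structure of the Jacobian (with $J_4$ untouched) giving Condition 5 --- is precisely the bookkeeping that remark leaves implicit. If anything, your write-up is more explicit than the paper's.
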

The proof is done in the exact same way as in Appendix \ref{welldef}.

Now we move ahead to compute $R_0$.  However, note that this process involves only the infected equations and not the 
recovered equation.  So the process is exactly the same as in the last section, but with $\mu_i$ replaced by $\mu_i
+\theta_i$.  Thus when we have removal to the recovered compartment:
\[\scalebox{.79}{$\displaystyle{
			R_0 = f\sqrt{\frac{cc_v \overline{S}_v}{\tilde{\mu}\overline{S}} \frac{1}{\alpha_1 + \mu_{1}+\theta_1}
				\left( 1+ \frac{\alpha_1}{\alpha_2 + \mu_{2}+\theta_2}
					\left( 1+  \frac{\alpha_2}{\alpha_3 + \mu_{3}+\theta_3}\left( 1+ \ldots
	\frac{\alpha_{j-2}}{\alpha_{j-1} + \mu_{j-1}+\theta_{j-1}}\left( 1 + \frac{\alpha_{j-1}}{\gamma + \mu_{j}} \right) 
	\ldots \right)\right)\right)}}$}.
\]
Alternatively,
\begin{equation}
	\label{rnotrem}
R_0= f \sqrt{\frac{cc_v\overline{S}_v}{\tilde{\mu}\overline{S}} \sum_{k=1}^j \prod_{l=1}^k \frac{\alpha_{l-1}}{\alpha_l + \mu_l +\theta_l} }
\end{equation}
where $\alpha_0=1$ and $\alpha_j=\gamma$.
As one might expect, removal to the recovered compartment drives down $R_0$ since the $\theta_i \geq 0$ only appear in the
denominators.  The magnitude of this contribution is determined by the size of $\theta_i$ compared
to $\alpha_{i-1}$.  If the recovery rate from the $i$th compartment is small compared to the rate at which individuals
are being transferred into that compartment, then it has little effect on the spread of the disease.  Conversely, if 
the direct recovery from the $i$th infected compartment is large compared to the rate at which individuals are transferred
in, it will result in a more significant mitigation of the disease spread.

We can also use this result to determine at which stage treatment of the disease is most effective, in terms of reducing $R_0$.
Treatment at the $\i$th compartment is equivalent to increasing $\theta_i$, which then reduces the size of the terms in the sum
in Equation \eqref{rnotrem} for $k\geq i$.  Hence, the compartment that reduces the most terms yields the largest decrease in $R_0$ i.e. the first compartment.
This matches our intuition, since we most drastically reduce total infection time, and thus the ability of the host to infect a susceptible vector, by treating the disease before the first relapse.
\subsection{The Bifurcation at $R_0=1$}
There are still questions about which results from the previous section are easily extended to the case with removal
to the recovered compartment, e.g., the transcritical bifurcation at $R_0 = 1$ and the number of EE.  As an opening
step we consider the Jacobian matrix for this system:
\[\scalebox{.75}{$\displaystyle{
	\begin{pmatrix} \alpha_1 + \mu_1+\theta_1 & 0 & 0 & \ldots & 0 & 0 & -fc_v & 0 & 0 & 0 \\
	-\alpha_1 & \alpha_2+\mu_2+\theta_2 & 0 & \ldots & 0 & 0 & 0 & 0 & 0 & 0\\
	0 & -\alpha_2 & \alpha_3+\mu_3 + \theta_3 & \ldots  & 0 & 0 & 0 & 0 & 0 & 0\\
	\vdots &\vdots &\vdots & \ddots & \vdots &\vdots &\vdots &\vdots &\vdots &\vdots\\ 
	0 & 0 & 0& \ldots & -\alpha_{j-1} & \gamma + \mu_j& 0 & 0 & 0 & 0\\
	-\frac{fc\bar{S}_v}{\bar{S}} &	-\frac{fc\bar{S}_v}{\bar{S}} &	-\frac{fc\bar{S}_v}{\bar{S}} & \ldots &-\frac{fc\bar{S}_v}{\bar{S}}&-\frac{fc\bar{S}_v}{\bar{S}} 
	&\tilde{\mu} &0&0&0\\
	0&0&0& \ldots &0&0& fc_v & \mu_s& 0 &0\\
	\frac{fc\tilde{S}}{N} &\frac{fc\tilde{S}}{N} &\frac{fc\tilde{S}}{N} & \ldots &\frac{fc\tilde{S}}{N} &\frac{fc\tilde{S}}{N} &0&0&\tilde{\mu}_s&0\\
			-\theta_1&-\theta_2&-\theta_3& \ldots & -\theta_{j-1} & -\gamma &0&0&0&\mu_r
\end{pmatrix}}$}
\]
To apply Theorem \ref{transcb} we need to know the multiplicity for the 0 eigenvalue for this matrix.
The simplicity of the 0 eigenvalue and the proof that $b \neq 0$ (as defined in the previous sections) are precisely the same here as in Appendix \ref{simpproof}.
While we now have more nonzero elements in $-J_4^{-1}J_3$, the $\epsilon_{3,k}$, are cancelled out
since the associated second derivatives in the equation for $a$ are the derivative of an infected variable with respect to the
recovered variable, and thus are 0.  Thus, the computation for $a$ is exactly the same as Appendix \ref{lem1proof} and we have $a<0$.
Hence, we have the following Corollary:
\begin{corollary}
	The nontrivial DFE of the system \eqref{hostre}, \eqref{vecre} undergoes a transcritical bifurcation as $R_0$ goes
	above 1, and has a branch of locally asymptotically stable EE for $R_0$ sufficiently close to 1.
\end{corollary}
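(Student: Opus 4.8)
The plan is to invoke Theorem \ref{transcb} exactly as was done for the relapse-only model in Section \ref{bifsec}, verifying that each of its hypotheses survives the modification $\mu_i \mapsto \mu_i + \theta_i$ together with the new coupling into the recovered equation. The theorem requires that (i) the system meet Conditions 1--5 of \cite{compart}, (ii) the linearization $F-V$ at the DFE have $0$ as a simple eigenvalue, and (iii) the bifurcation coefficients satisfy $b \neq 0$ and $a < 0$; once these hold, then with $\mu = R_0 - 1$ there is a branch of locally asymptotically stable endemic equilibria for $0 < \mu < \delta$, and the exchange of stability with the DFE as $\mu$ crosses $0$ is precisely the transcritical bifurcation asserted.

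First I would dispose of (i) and (ii). Condition (i) is handled by the preceding Proposition, whose proof is identical to Appendix \ref{welldef}. For (ii) the point is structural: the $\theta_i$ only enlarge the diagonal entries of the infected block and introduce a row of $-\theta_i$ together with a column governing $R$, all of which live in the $-J_3$ and $-J_4$ blocks of the full Jacobian and therefore leave the spectrum of $F-V$ untouched. Consequently the argument for simplicity of the zero eigenvalue, and likewise the computation showing $b \neq 0$, go through verbatim as in Appendix \ref{simpproof}; I would simply remark that the infected subsystem retains the lower-triangular-plus-sparse form that made the next-generation calculation of $R_0$ in \eqref{rnotrem} go through.

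The crux is (iii), the sign of $a$. Here the new model genuinely differs, because removal to $R$ produces additional nonzero entries $\varepsilon_{3,k}$ in $-J_4^{-1}J_3$ (the recovered row), and a priori these could contribute to the double sum defining $a$. The key observation I would make is that every such entry is multiplied in the formula for $a$ by a second derivative $\partial^2 f_i / \partial x_j \partial x_l$ in which $x_l$ is the recovered variable. Since neither infection-force term, $fc_v \widetilde{I} S/N$ nor $fc\widetilde{S}\sum_k I_k / N$, depends on $R$, all of these mixed derivatives vanish, so the would-be new contributions drop out identically and $a$ collapses to the very expression evaluated in Appendix \ref{lem1proof}. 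That computation gives $a < 0$.

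With $a < 0$ and $b \neq 0$ in hand, Theorem \ref{transcb} delivers the locally asymptotically stable branch of endemic equilibria for $R_0$ just above $1$ and identifies the bifurcation at $R_0 = 1$ as transcritical, which is the assertion of the Corollary. The main obstacle, and the only place where one must do more than cite Section \ref{bifsec}, is exactly the verification in (iii): one must confirm that the single structural novelty of this model --- the coupling of the infected compartments to $R$ --- is invisible to the quadratic part of the bifurcation normal form, i.e.\ that the $R$-derivatives of the nonlinearities are all zero. Once that is checked, the entire machinery of the relapse-only model transfers without change.
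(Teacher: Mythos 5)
Your proposal reproduces the paper's own argument essentially verbatim: invoke the preceding Proposition for Conditions 1--5, note that the simplicity of the zero eigenvalue and $b\neq 0$ carry over from Appendix \ref{simpproof} under the substitution $\mu_i \mapsto \mu_i+\theta_i$, and---the crux, exactly as the paper frames it---observe that the new entries $\varepsilon_{3,k}$ of $-J_4^{-1}J_3$ are annihilated because the infection terms have no $R$-dependence, so the computation of $a$ collapses to Appendix \ref{lem1proof}, giving $a<0$ and allowing Theorem \ref{transcb} to apply. One small wording caveat: the $\theta_i$ do alter the infected block (its diagonal becomes $\alpha_i+\mu_i+\theta_i$), so the spectrum of $F-V$ is not literally untouched; what actually matters, as you yourself note, is that the block retains the same lower-triangular-plus-sparse structure, so the simplicity argument goes through with the substituted parameters.
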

\section{Discussion and Future Work}
Using the next generation method and standard matrix computation methods we have found a form for $R_0$ of
vector-borne relapsing diseases with an arbitrary number of relapses.  From this we conclude that $R_0$ increases
as the number of relapses of the disease increases, with all other parameters fixed. 
We have also taken advantage of results in \cite{compart} to show the existence of a branch of endemic equilibria that are locally asymptotically stable for $R_0$
sufficiently close to 1.  A straightforward calculation demonstrated that only one such branch of endemic
equilibria can exist.  Both of these results are independent of the number of relapses.  The form of $R_0$ did not yield any particularly enlightening control strategies
for the disease: $R_0$ can be completely controlled through, biting rate, competencies, population size, and vector death rates.

The computation of $R_0$ relied on the assumption of a constant population in the hosts.  This assumption was
shown to be equivalent to equal death rates in the infected host compartments.
  Allowing for variable death rates among the compartments changes the form of the
Jacobians that make up the next
generation matrix, though future work may show that the computations for $R_0$ are similar.  However, in the case
of equal death rates the constant population is found to be attracting and thus we need only
study the dynamics restricted to this constant population.

Future work will consider refinements of our model which incorporate a period of non-infectivity (a latent state)
in the hosts between
relapses.
The relapses of TBRF are driven by antigenic variation of \textit{Borellia} spirochetes
within the host \cite{antigen}.
Preceding this change is the host's immune response which nearly eradicates the bacteria from the
host.  However, \textit{Borellia} can initiate a full infection with a single spirochete.  Further
details on the relapse mechanism can be found in \cite{antigen}.  The apparent lack of spirochetes in the host
results in a week of apparent health between relapses of TBRF and any susceptible tick that bites the host
will not become infected. Hence, hosts in this
latent state will not drive the infection of susceptible vectors.  Further work is focusing on the
changes in $R_0$ and the dynamics caused by the
addition of latent states.

\bibliographystyle{apalike}      
\bibliography{currentresearch}   

%
%
\appendix
\section{Appendix}
\subsection{The Theory of Compartmental Models}
\label{welldef}
We give a brief description of compartmental models here, following their development in
\cite{compart}.  Consider a population which can be separated into $n$ homogeneous compartments, with the number of members in each compartment
represented by the vector $\bm{x} \in \mathbb{R}^n$.  We suppose that the first $m$ compartments represent infected states while the remaining $n-m$ compartments
are uninfected states.  It is natural to insist that $\bm{x} \geq \bm{0}$ (inequality taken componentwise) since the $x_i$ represent populations.
Let $\bm{X}_s = \{\bm{x} \geq \bm{0}:
x_i = 0, i=1, \ldots m\}$ be the disease free states.  Let $\mathcal{F}_i(\bm{x})$ be the number of new infections in compartment $i$ (autonomy is assumed).  $\mathcal{V}_i^+(\bm{x})$ is
the rate of transfer of individuals into compartment $i$ and $\mathcal{V}_i^-(\bm{x})$ is the rate of transfer out.  Assume that these function are at least twice continuously differentiable
and we can write our disease transmission model as
\begin{equation} \label{compsys}
	\dot{x}_i = f_i(\bm{x}) = \mathcal{F}_i(\bm{x}) +\mathcal{V}_i^+(\bm{x}) - \mathcal{V}_i^-(\bm{x}) \qquad i=1,\ldots n
\end{equation}
Five conditions need to be met in order to split the system in
such a way that the computation of $R_0$ is straightforward.  They are as follows.
\begin{condition}{1}
\bm{x} \geq 0 \Rightarrow \mathcal{F}_i, \mathcal{V}_i^+, \mathcal{V}_i^- \geq 0
\end{condition}
This is natural to assume as these functions represent a transfer of individuals between compartments.
\begin{condition}{2}
	x_i=0 \Rightarrow \mathcal{V}_i^-(\bm{x}) = 0
\end{condition}
This condition requires that no individuals can transfer out of an empty compartment.  In particular, if we are in $\bm{X}_s$ then we have $\mathcal{V}_i = 0$ for $i=1,2,\ldots m$.
These two conditions are enough to prove that solutions of the ODE will remain nonnegative when the initial conditions are nonnegative.
\begin{condition}{3}
	\mathcal{F}_i = 0  \mbox{ for } i>m
\end{condition}
Hence there are no new infections in the noninfected compartments.
\begin{condition}{4}
	\bm{x} \in \bm{X}_s \Rightarrow \mathcal{F}_i(\bm{x}) = 0 \mbox{ and } \mathcal{V}_i^+(\bm{x}) = 0 \mbox{ for } i=1,\ldots,m
\end{condition}
This condition insures that in disease free states there are no new infections in the infected compartments and there are no individuals being transferred into those compartments.
Now assume that $\bm{x_0} \in \bm{X}_s$ is a fixed point of \eqref{compsys}.  Such points are called Disease Free Equilibria (DFE).  We consider only systems
where DFE are stable in the absence of infection.  Let  $\mathcal{F}(\bm{x}) = (\mathcal{F}_1(\bm{x}), \ldots, \mathcal{F}_n(\bm{x}))^T$ then
\begin{condition}{5}
	\mathcal{F}(\bm{x}) \equiv \bm{0} \Rightarrow \mbox{ The DFE is stable.}
\end{condition}
In particular this implies that the Jacobian $Df(\bm{x}_0)$ has eigenvalues with negative real parts.
Let $\mathcal{V}_i = \mathcal{V}_i^--\mathcal{V}_i^+$ and $\mathcal{V} = (\mathcal{V}_1, \ldots, \mathcal{V}_n)^T$.  Then $f=\mathcal{F} - \mathcal{V}$.  It can be shown that
\[
	Df(\bm{x}_0) = D\mathcal{F}(\bm{x}_0) - D\mathcal{V}(\bm{x}_0) =
\begin{pmatrix} F & 0\\ 0 & 0 \end{pmatrix} - \begin{pmatrix} V & 0 \\ J_3 & J_4 \end{pmatrix}
\]
The $n \times n$ matrix $F$ is nonnegative, the $n \times n$ matrix $V$ is nonsingular and $J_4$ has eigenvalues with positive real parts.  The matrix $FV^{-1}$ is
called the next generation matrix.  Theorem 2 of \cite{compart} says that given a system \eqref{compsys} satisfying Conditions 1-5, if $\rho(FV^{-1})<1$ the DFE
is asymptotically stable, and if $\rho(FV^{-1}) > 1$ the DFE is unstable.  We confirm now that our model meets the conditions.
\begin{proposition}
	The system \eqref{host}, \eqref{vec}  satisfies conditions 1-5.
\end{proposition}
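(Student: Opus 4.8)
The plan is to begin by fixing the van den Driessche--Watmough decomposition. I would take the infected compartments to be $I_1, \ldots, I_j, \widetilde{I}$ (the first $m = j+1$ coordinates) and the uninfected ones to be $S$, $R$, $\widetilde{S}$, then split each right-hand side of \eqref{host}, \eqref{vec} into the new-infection term $\mathcal{F}_i$, the transfer-in term $\mathcal{V}_i^+$, and the transfer-out term $\mathcal{V}_i^-$. The only genuinely new infections are the transmission terms, so I would set $\mathcal{F}_{I_1} = fc_v\widetilde{I}\,S/N$ and $\mathcal{F}_{\widetilde{I}} = (fc\widetilde{S}/N)\sum_{k} I_k$, with $\mathcal{F}_i = 0$ for every other compartment; the relapse fluxes $\alpha_{k-1}I_{k-1}$ become the $\mathcal{V}_i^+$ entries, and the removal terms $(\alpha_k + \mu_k)I_k$, $(\gamma + \mu_j)I_j$, and $\widetilde{\mu}\widetilde{I}$ become the $\mathcal{V}_i^-$ entries. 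This is precisely the $\bm{w} - \bm{v}$ splitting already used in Section~\ref{3sec:2}, so the matrices $F$ and $V$ computed there are the ones that reappear here.

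With this decomposition in hand, Conditions 1--4 reduce to inspection. For Condition 1, each $\mathcal{F}_i$, $\mathcal{V}_i^+$, $\mathcal{V}_i^-$ is a product of positive parameters and nonnegative state variables divided by $N = \overline{S} > 0$, hence nonnegative on $\bm{x} \geq \bm{0}$. For Condition 2, every $\mathcal{V}_i^-$ is a scalar multiple of $x_i$ itself, so it vanishes when $x_i = 0$. Condition 3 holds because $\mathcal{F}$ was defined to vanish on the uninfected compartments; in particular the loss term $-fc_v\widetilde{I}\,S/N$ in the $S$ equation is bookkept as a transfer out of $S$, not as a new infection appearing in $S$. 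For Condition 4, on the disease-free set $\bm{X}_s = \{I_1 = \cdots = I_j = \widetilde{I} = 0\}$ both $\mathcal{F}_{I_1}$ and $\mathcal{F}_{\widetilde{I}}$ vanish (the former because $\widetilde{I} = 0$, the latter because each $I_k = 0$), and each $\mathcal{V}_i^+ = \alpha_{k-1}I_{k-1}$ vanishes because its source compartment is empty.

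The only step requiring real work is Condition 5, asymptotic stability of the DFE once all transmission is switched off ($\mathcal{F} \equiv 0$). Here I would use the block-triangular form of the Jacobian at $\bm{x}_0$: with $F = 0$ it reads $-\left(\begin{smallmatrix} V & 0 \\ J_3 & J_4\end{smallmatrix}\right)$, so its spectrum is the union of the spectra of $-V$ and $-J_4$. Since $V$ is lower triangular with strictly positive diagonal entries $\alpha_k + \mu_k$, $\gamma + \mu_j$, $\widetilde{\mu}$, the matrix $-V$ is stable. The remaining task, and the crux, is to show that the uninfected block $J_4$, which governs $S$, $R$, $\widetilde{S}$, has eigenvalues with positive real part. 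This is where the logistic structure enters: I would linearize the decoupled growth/death dynamics of $S$ and $\widetilde{S}$ about $(\overline{S}, \overline{S}_v)$ and invoke the computation already carried out in Proposition~\ref{eqdeath}, which shows that under $\beta_1 > \mu_s$ (and the analogous vector inequality) the constant-population manifold is attracting, while $R$ decays at rate $\mu_r > 0$. Confirming the signs of these eigenvalues, in particular handling the coupling through $N$ inside $\beta(N)$ and $\beta_v(\widetilde{N}, N)$, is the main obstacle; once it is settled, all five conditions are verified and the next-generation machinery of \cite{compart} applies.
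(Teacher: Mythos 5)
Your decomposition and your handling of Conditions 1--4 coincide with the paper's proof: the paper writes the system in the form \eqref{compsys} with exactly your choice of $\mathcal{F}$, $\mathcal{V}^+$, $\mathcal{V}^-$ (transmission terms as new infections, relapse fluxes as transfers in, everything else as transfers out) and disposes of the first four conditions by inspection, as you do. The problem is Condition 5, which you correctly single out as the only step requiring real work --- and then do not carry out. Your proposal ends with ``confirming the signs of these eigenvalues \ldots is the main obstacle; once it is settled, all five conditions are verified,'' which is an admission that the crux is missing, not an argument for it. Moreover, the tool you propose to lean on does not close the gap by itself: Proposition \ref{eqdeath} is an invariance statement (the manifold $N=\overline{S}$ is invariant if and only if the death rates are equal), and the remark following it gives attractivity of $N=\overline{S}$ only for the one-dimensional total-population dynamics. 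That accounts for a single eigenvalue of the $3\times 3$ uninfected block; it says nothing about the directions transverse to the constant-population manifold, in particular the $\widetilde{S}$ direction, where the coupling through $\beta_v(\widetilde{N},N)$ lives.

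To finish along your route one must actually linearize the uninfected subsystem (with $I_k=\widetilde{I}=0$) at $(\overline{S},0,\overline{S}_v)$: in the ordering $(R,S,\widetilde{S})$ this linearization is lower triangular with diagonal entries $-\mu_r$, $\beta'(\overline{S})-\mu_s=-(\beta_1-\mu_s)$, and $\frac{\partial \beta_v}{\partial \widetilde{N}}(\overline{S}_v,\overline{S})-\widetilde{\mu}_s=-\left(\beta_{v1}\overline{S}-\widetilde{\mu}_s\right)$, so stability in fact requires the strict inequalities $\beta_1>\mu_s$ and $\beta_{v1}\overline{S}>\widetilde{\mu}_s$, whereas Section \ref{sec:1} only assumes $\geq$. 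The paper's own proof sidesteps this computation by working where the growth terms are the constants $\beta(\overline{S})=\mu_s\overline{S}$ and $\beta_v(\overline{S}_v,\overline{S})=\widetilde{\mu}_s\overline{S}_v$; the Jacobian with $\mathcal{F}\equiv 0$ is then genuinely lower triangular with diagonal entries $-(\alpha_1+\mu_1),\ldots,-(\gamma+\mu_j),-\widetilde{\mu},-\mu_s,-\mu_r,-\widetilde{\mu}_s$, all negative with no additional hypotheses. Either version is acceptable, but one of them has to be written down; as it stands, your proposal stops exactly where the proof begins.
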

\begin{proof}
Rearrange the system into a vector with the infected hosts, infected vectors, susceptible host recovered host, and susceptible vector :
\begin{equation}
\frac{d}{dt}\begin{pmatrix} I_1\\ I_2\\ \vdots\\ I_{j-1}\\I_j\\ \widetilde{I}\\ S\\R\\\widetilde{S} \end{pmatrix}
= \begin{pmatrix}fc_v\widetilde{I}\frac{S}{N}\\ 0 \\ \vdots \\ 0 \\ 0 \\ \frac{fc\widetilde{S}}{N}\sum_{k=1}^j I_k\\ 0\\0\\0 \end{pmatrix}
+ \begin{pmatrix} 0 \\ \alpha_1I_1 \\ \vdots \\ \alpha_{j-2}I_{j-2}\\ \alpha_{j-1}I_{j-1}\\ 0\\ \beta(N)\\\beta(N, \widetilde{N}) \\\gamma I_j\end{pmatrix}
-\begin{pmatrix} (\alpha_1 + \mu_1)I_1\\ (\alpha_2 + \mu_2)I_2\\ \vdots \\ (\alpha_{j-1} + \mu_{j-2})I_{j-2}\\(\alpha_{j-1} + \mu_{j-1})I_{j-1}
\\(\gamma + \mu_j)I_j\\ \widetilde{\mu}\widetilde{I}\\ fc_v\widetilde{I}\frac{S}{N} + \mu_sS\\ \frac{fc\widetilde{S}}{N}\sum_{k=1}^j I_k +
\widetilde{\mu}_s\widetilde{S}\\ \mu_rR\end{pmatrix}
\label{sysvecform}
\end{equation}
Once we have written the system in the form \eqref{compsys}, conditions 1,2 and 3 follow directly from this form.
Setting $I_1 = \ldots = I_j = \widetilde{I} = 0$ gives condition 4. For the final condition, assume that the first vector on the right hand side of the
above equation is $\bm{0}$.  Then, taking the Jacobian will yield a lower triangular matrix, because each equation does not involve a variable beyond its row.
Furthermore, the elements of the diagonal of this matrix, when evaluated that a DFE $(0,\ldots, 0, \overline{S}, \overline{R}, \overline{S}_v)$,  are
\[
	-(\alpha_1 + \mu_1), \ldots, -(\alpha_{j-1} + \mu_{j-1}), -(\gamma + \mu_j), -\widetilde{\mu}, -\mu_s, -\mu_r, -\widetilde{\mu}_s
\]
and as all the parameters are held to be positive, the diagonal elements, which are also the eigenvalues, are negative.
\end{proof}

\subsection{Simplicity of the 0 eigenvalue.}
\label{simpproof}
Consider the system \eqref{compsys}.
It is an easy exercise to show that the Jacobian of \eqref{compsys} takes the
form
\[
	\begin{pmatrix} -\alpha_1 - \mu_1 & 0 & 0 & \ldots & 0 & 0 & \frac{fc_vS}{N} & 0 & 0 & 0 \\
	\alpha_1 & -\alpha_2-\mu_2 & 0 & \ldots & 0 & 0 & 0 & 0 & 0 & 0\\
	0 & \alpha_2 & -\alpha_3-\mu_3 & \ldots  & 0 & 0 & 0 & 0 & 0 & 0\\
	\vdots &\vdots &\vdots & \ddots & \vdots &\vdots &\vdots &\vdots &\vdots &\vdots\\
	0 & 0 & 0& \ldots & \alpha_{j-1} & -\gamma - \mu_j& 0 & 0 & 0 & 0\\
	\frac{fc\tilde{S}}{N} &	\frac{fc\tilde{S}}{N} &	\frac{fc\tilde{S}}{N} & \ldots &\frac{fc\tilde{S}}{N} &	 \frac{fc\tilde{S}}{N} & -\tilde{\mu} &0&0&0\\
	0&0&0& \ldots &0&0& -\frac{fc_vS}{N} & -\mu_s& 0 &0\\
	-\frac{fc\tilde{S}}{N} &-\frac{fc\tilde{S}}{N} &-\frac{fc\tilde{S}}{N} & \ldots &-\frac{fc\tilde{S}}{N} &-\frac{fc\tilde{S}}{N} &0&0&-\tilde{\mu}_s&0\\
	0&0&0& \ldots & 0 & \gamma &0&0&0&-\mu_r
	\end{pmatrix}
\]
We evaluate this at the DFE and
then determine the algebraic multiplicity of the zero eigenvalue when $R_0 = 1$.  The eigenvalue matrix is
\[\scalebox{.85}{$\displaystyle{
	\begin{pmatrix} \lambda+\alpha_1 + \mu_1 & 0 & 0 & \ldots & 0 & 0 & -fc_v & 0 & 0 & 0 \\
	-\alpha_1 & \lambda+\alpha_2+\mu_2 & 0 & \ldots & 0 & 0 & 0 & 0 & 0 & 0\\
	0 & -\alpha_2 & \lambda+\alpha_3+\mu_3 & \ldots  & 0 & 0 & 0 & 0 & 0 & 0\\
	\vdots &\vdots &\vdots & \ddots & \vdots &\vdots &\vdots &\vdots &\vdots &\vdots\\
	0 & 0 & 0& \ldots & -\alpha_{j-1} & \lambda + \gamma + \mu_j& 0 & 0 & 0 & 0\\
	-\frac{fc\bar{S}_v}{\bar{S}} &	-\frac{fc\bar{S}_v}{\bar{S}} &	-\frac{fc\bar{S}_v}{\bar{S}} & \ldots &-\frac{fc\bar{S}_v}{\bar{S}}&-\frac{fc\bar{S}_v}{\bar{S}}
	&\lambda  + \tilde{\mu} &0&0&0\\
	0&0&0& \ldots &0&0& fc_v & \lambda+\mu_s& 0 &0\\
	\frac{fc\tilde{S}}{N} &\frac{fc\tilde{S}}{N} &\frac{fc\tilde{S}}{N} & \ldots &\frac{fc\tilde{S}}{N} &\frac{fc\tilde{S}}{N} &0&0&\lambda+\tilde{\mu}_s&0\\
	0&0&0& \ldots & 0 & -\gamma &0&0&0&\lambda+\mu_r
\end{pmatrix}}$}
\]
The Jacobian matrix has block form
\[
	Df(\bm{x}_0) = \begin{pmatrix}
		T&0\\ L& D
	\end{pmatrix}
\]
 and the multiplicity of the zero eigenvalue will be the sum of the multiplicities of the diagonal blocks \cite{matmat}.
However, the multiplicity of 0 in $D$ is 0, since it is diagonal. Hence we need only compute the multiplicity of zero in $T$.  The relevant calculation is
\[\scalebox{.85}{$\displaystyle{
	p(\lambda) = \det\begin{pmatrix} \lambda+\alpha_1 + \mu_1 & 0 & 0 & \ldots & 0 & 0 & -fc_v\\
	-\alpha_1 & \lambda+\alpha_2+\mu_2 & 0 & \ldots & 0 & 0 & 0 \\
	0 & -\alpha_2 & \lambda+\alpha_3+\mu_3 & \ldots  & 0 & 0 & 0\\
	\vdots &\vdots &\vdots & \ddots & \vdots &\vdots &\vdots \\
	0 & 0 & 0& \ldots & -\alpha_{j-1} & \lambda + \gamma + \mu_j& 0 \\
	-\frac{fc\bar{S}_v}{\bar{S}} &	-\frac{fc\bar{S}_v}{\bar{S}} &	-\frac{fc\bar{S}_v}{\bar{S}} & \ldots &-\frac{fc\bar{S}_v}{\bar{S}}&-\frac{fc\bar{S}_v}{\bar{S}}
	&\lambda + \tilde{\mu}
\end{pmatrix}}$}
\]
First we expand along the top row
\[\scalebox{.9}{$\displaystyle{
	p(\lambda) = (\lambda + \alpha_1 + \mu_1) \det \begin{pmatrix}
	 \lambda+\alpha_2+\mu_2 & 0 & \ldots & 0 & 0 & 0 \\
	 -\alpha_2 & \lambda+\alpha_3+\mu_3 & \ldots  & 0 & 0 & 0\\
	\vdots &\vdots & \ddots & \vdots &\vdots &\vdots \\
	 0 & 0& \ldots & -\alpha_{j-1} & \lambda + \gamma + \mu_j& 0 \\
	-\frac{fc\bar{S}_v}{\bar{S}} &	-\frac{fc\bar{S}_v}{\bar{S}} & \ldots &-\frac{fc\bar{S}_v}{\bar{S}}&-\frac{fc\bar{S}_v}{\bar{S}}
	&\lambda + \tilde{\mu}
\end{pmatrix}}$}
\]
\[\scalebox{.9}{$\displaystyle{
	 + (-1)^{j} (-fc_v) \det \begin{pmatrix}
	-\alpha_1 & \lambda+\alpha_2+\mu_2 & 0 & \ldots & 0 & 0  \\
	0 & -\alpha_2 & \lambda+\alpha_3+\mu_3 & \ldots  & 0 & 0 \\
	\vdots &\vdots &\vdots & \ddots & \vdots &\vdots  \\
	0 & 0 & 0& \ldots & -\alpha_{j-1} & \lambda + \gamma + \mu_j \\
	-\frac{fc\bar{S}_v}{\bar{S}} &	-\frac{fc\bar{S}_v}{\bar{S}} &	-\frac{fc\bar{S}_v}{\bar{S}} & \ldots &-\frac{fc\bar{S}_v}{\bar{S}}&-\frac{fc\bar{S}_v}{\bar{S}}
 \end{pmatrix}}$}
\]
The determinant of the first matrix is the product of the diagonals, being that it is lower triangular.
Computing the second determinant requires some courage, and a little trickiness.  First we divide the bottom row by $-\frac{fc\bar{S}_v}{\bar{S}}$.
For the value of the determinant to stay the same, we also multiply it by the same amount.  Hence we compute
\[
	(-1)^{j} \frac{f^2 c c_v\bar{S}_v}{\bar{S}} \det \begin{pmatrix}
	-\alpha_1 & \lambda+\alpha_2+\mu_2 & 0 & \ldots & 0 & 0  \\
	0 & -\alpha_2 & \lambda+\alpha_3+\mu_3 & \ldots  & 0 & 0 \\
	\vdots &\vdots &\vdots & \ddots & \vdots &\vdots  \\
	0 & 0 & 0& \ldots & -\alpha_{j-1} & \lambda + \gamma + \mu_j \\
			1 &1 &1& \ldots  &1&1
	\end{pmatrix}
\]
by expanding along the bottom row.  Then at each step of the expansion we will take the determinant of a block diagonal
matrix, and each matrix along the diagonal will be a triangular matrix.  In particular, once the $k$th column and the bottom row are removed, the diagonal to the left of the column has
 the elements $-\alpha_1, \ldots , -\alpha_{k-1}$ with the only other nonzero elements above this diagonal.
To the right of the column the diagonal elements are $\lambda + \alpha_{k+1} + \mu_{k+1}, \ldots ,
\lambda + \gamma + \mu_j$, and on this side the only other nonzero elements are above the diagonal.  Thus the minors can be written in this form
\[
\begin{pmatrix} A & 0\\0 &B \end{pmatrix}
\]
According to \cite{matmat} the determinant of this minor will be the product of the determinants of the diagonal matrices, and as we
have already mentioned $A$ is upper triangular and $B$ is lower triangular.  Thus, the determinant of the minor is the product of the diagonal
elements.  The signs for the minors along the bottom are  $(-1)^{j+k}$.  Furthermore we will multiply the $k-1$ negative elements in the upper matrix.
Hence the sign of each term is $(-1)^{j+k+k-1} = (-1)^{j+2k-1} = (-1)^{j-1}$.  None of these depend on $k$, we can factor it out and combine it with
the $(-1)^j$ on the outside, and have $(-1)^{j+j-1} = -1$.  Thus, the characteristic polynomial takes the form
\[
	\begin{split}
		p(\lambda) = & (\lambda + \alpha_1 + \mu_1) \ldots (\lambda + \gamma + \mu_j)(\lambda + \tilde{\mu})\\
			     & - \frac{f^2 c c_v \bar{S}_v}{\bar{S}} \left[ (\lambda + \alpha_2 + \mu_2) \ldots (\lambda + \gamma + \mu_j) \right.\\
		      &+ \alpha_1(\lambda + \alpha_3 + \mu_3)
	\ldots (\lambda + \gamma + \mu_j) \\
& \quad \left. + \ldots \alpha_1 \ldots \alpha_k( \lambda + \alpha_{k+2} + \mu_{k+2}) \ldots(\lambda + \gamma + \mu_j) + \ldots + \alpha_1 \ldots \alpha_{j-1} \right]
 \end{split}
\]
To show that $0$ is a simple eigenvalue, we must show that when $R_0 = 1$, the constant term of this polynomial is 0 and the linear term is nonzero.
 In order to ease some of the calculation we next
build up some notation.  For indexing purposes define $\alpha_0=1$.  Now let $\xi_i = \alpha_i + \mu_i$ for $1 \leq i \leq j-1$ and $\xi_j = \gamma + \mu_j$ and $\xi_{j+1}
= \tilde{\mu}$.  This sets up a consistent notation for the parameters.  Also, we need to take products of all but one of these parameters, so
we define the following
\[
	\xi_1 \ldots \hat{\xi}_i \ldots \xi_{j} = \xi_1 \ldots \xi_{i-1} \xi_{i+1} \ldots \xi_j
\]
The hat tells us which of the parameters is deleted from the product.  We can now rewrite $R_0$ using this notation
\[
	R_0 = f\sqrt{ \frac{c c_v \bar{S}_v}{\bar{S}\xi_{j+1}}\sum_{k=1}^j \frac{\alpha_0 \ldots \alpha_{k-1}}{\xi_1 \ldots \xi_k}}
\]
and $p(\lambda)$ becomes
\begin{equation} \label{charpoly}
	p(\lambda) = \prod_{i=1}^j (\lambda + \xi_i) - \frac{f^2 c c_v \bar{S}_v}{\bar{S}} \sum_{i=0}^{j-1} \alpha_0 \ldots \alpha_{i}(\lambda + \xi_{i+2}) \ldots (\lambda + \xi_{j} )	
\end{equation}
The constant term is found by evaluating $p(0)$:
\[
	p(0) = \prod_{i=1}^j \xi_i  -  \frac{f^2 c c_v \bar{S}_v}{\bar{S}} \sum_{i=0}^{j-1} \alpha_0 \ldots \alpha_{i}\xi_{i+2} \ldots \xi_{j}
\]
Next, we solve $p(0) = 0$.
\[
	1 = \frac{f^2 c c_v \bar{S}_v}{\bar{S}} \frac{1}{\xi_1 \ldots \xi_j\xi_{j+1}}\sum_{i=0}^{j-1} \alpha_0 \ldots \alpha_{i}\xi_{i+2} \ldots \xi_{j}
\]
\[
	 =  \frac{f^2 c c_v \bar{S}_v}{\bar{S}\xi_{j+1}}\sum_{i=0}^{j-1} \frac{\alpha_0 \ldots \alpha_{i}\xi_{i+2} \ldots \xi_{j}}{\xi_1 \ldots \xi_j}
\]
\[
	=  \frac{f^2 c c_v \bar{S}_v}{\bar{S}\xi_{j+1}}\sum_{i=0}^{j-1}\frac{\alpha_0 \ldots \alpha_{i}}{\xi_1 \ldots \xi_{i+1}}
\]
Letting $k=i+1$ this becomes
\[
	 =   \frac{f^2 c c_v \bar{S}_v}{\bar{S}\xi_{j+1}}\sum_{k=1}^{j}\frac{\alpha_0 \ldots \alpha_{k-1}}{\xi_1 \ldots \xi_{k}} = R_0^2
\]
This implies that the linear term is 0 if and only if $R_0=1$.  This only establishes that the 0 has nontrivial algebraic multiplicity,
to establish simplicity we check that the coefficient of the linear term is nonzero when $R_0=1$.   Given a product of factors
\[
	(\lambda+a_1) \ldots (\lambda+a_n)
\]
the coefficient of the linear term, in the ``hat" notation, is
\[
	\sum_{i=1}^n a_1\ldots \hat{a}_i \ldots a_n
\]
Applying this to \eqref{charpoly} we find the coefficient of the linear term of the characteristic polynomial to be
\[
	\sum_{i=1}^{j+1} \xi_1 \ldots \hat{\xi}_i \ldots \xi_{j+1} - \frac{f^2 c c_v \bar{S}_v}{\bar{S}}\sum_{k=0}^{j-2} \alpha_0 \ldots \alpha_k \sum_{i=k+2}^ j
	\xi_{k+2} \ldots \hat{\xi}_i \ldots \xi_j
\]
Assume that this is equal to zero when $R_0=1$ and we will arrive at a contradiction.  Multiplying the second term by $\frac{\xi_{j+1}}{\xi_{j+1}}$
\[
	0= \sum_{i=1}^{j+1} \xi_1 \ldots \hat{\xi}_i \ldots \xi_{j+1} - \frac{f^2 c c_v \bar{S}_v}{\bar{S}\xi_{j+1}}\sum_{k=0}^{j-2} \sum_{i=k+2}^ j \alpha_0 \ldots \alpha_k
	\xi_{k+2} \ldots \hat{\xi}_i \ldots \xi_j \xi_{j+1}
\]
where upon some manipulation the expression yields
\[
	1 = \frac{\displaystyle{ \frac{f^2 c c_v \bar{S}_v}{\bar{S}\xi_{j+1}}\sum_{k=0}^{j-2} \sum_{i=k+2}^ j \alpha_0 \ldots \alpha_k
	\xi_{k+2} \ldots \hat{\xi}_i \ldots \xi_j \xi_{j+1}}}{\displaystyle{\sum_{i=1}^{j+1} \xi_1 \ldots \hat{\xi}_i \ldots \xi_{j+1}}}
\]
But $R_0^2=1$ so
\[
	\frac{f^2 c c_v \bar{S}_v}{\bar{S}\xi_{j+1}}\sum_{k=1}^{j}\frac{\alpha_0 \ldots \alpha_{k-1}}{\xi_1 \ldots \xi_{k}} =
	\frac{\displaystyle{ \frac{f^2 c c_v \bar{S}_v}{\bar{S}\xi_{j+1}}\sum_{k=0}^{j-2} \sum_{i=k+2}^ j \alpha_0 \ldots \alpha_k
	\xi_{k+2} \ldots \hat{\xi}_i \ldots \xi_j \xi_{j+1}}}{\displaystyle{\sum_{i=1}^{j+1} \xi_1 \ldots \hat{\xi}_i \ldots \xi_{j+1}}}
\]
Canceling the constant in front, and multiplying by the denominator we get
\[
	\sum_{i=1}^j \sum_{k=1}^{j+1} \frac{\alpha_0 \ldots \alpha_{i-1} \xi_{1} \ldots \hat{\xi}_k \ldots \xi_{j+1}}{\xi_1 \ldots \xi_i}
	 = \sum_{k=0}^{j-2} \sum_{i=k+2}^ j \alpha_0 \ldots \alpha_k
	\xi_{k+2} \ldots \hat{\xi}_i \ldots \xi_j \xi_{j+1}
\]
We can split the first sum into two parts depending on the largest value of $k$.  In particular, when $k \leq i$  $\xi_k$ will not cancel out of the denominator,
but when $k \geq i+1$ the whole denominator will cancel, so we write
\[
	\sum_{i=1}^j \sum_{k=1}^i \frac{\alpha_0 \ldots \alpha_{i-1} \xi_{i+1} \ldots \xi_{j+1}}{\xi_k} + \sum_{i=1}^j \sum_{k=i+1}^j \alpha_0 \ldots \alpha_{i-1}
	\xi_{i+1} \ldots \hat{\xi}_k \ldots \xi_{j+1}
\]
\[
	= \sum_{k=0}^{j-2} \sum_{i=k+2}^ j \alpha_0 \ldots \alpha_k
	\xi_{k+2} \ldots \hat{\xi}_i \ldots \xi_j \xi_{j+1}
\]
Exchanging $i$ and $k$ on the right hand side this becomes
\[
	\sum_{i=1}^j \sum_{k=1}^i \frac{\alpha_0 \ldots \alpha_{i-1} \xi_{i+1} \ldots \xi_{j+1}}{\xi_k} + \sum_{i=1}^j \sum_{k=i+1}^j \alpha_0 \ldots \alpha_{i-1}
	\xi_{i+1} \ldots \hat{\xi}_k \ldots \xi_{j+1}
\]
\[
	= \sum_{i=0}^{j-2} \sum_{k=i+2}^ j \alpha_0 \ldots \alpha_i
	\xi_{i+2} \ldots \hat{\xi}_k \ldots \xi_j \xi_{j+1}
\]
Shifting the $i$ index up by 1 on the RHS yields
\[
	\sum_{i=1}^j \sum_{k=1}^i \frac{\alpha_0 \ldots \alpha_{i-1} \xi_{i+1} \ldots \xi_{j+1}}{\xi_k} + \sum_{i=1}^j \sum_{k=i+1}^j \alpha_0 \ldots \alpha_{i-1}
	\xi_{i+1} \ldots \hat{\xi}_k \ldots \xi_{j+1}
\]
\[
	= \sum_{i=1}^{j-1} \sum_{k=i+1}^ j \alpha_0 \ldots \alpha_{i-1}
	\xi_{i+1} \ldots \hat{\xi}_k \ldots \xi_j \xi_{j+1}
\]
Thus we have that
\[
	\sum_{i=1}^j \sum_{k=1}^i \frac{\alpha_0 \ldots \alpha_{i-1} \xi_{i+1} \ldots \xi_{j+1}}{\xi_k} + \alpha_0 \ldots \alpha_{j-1} = 0
\]
However, this cannot be so because all of the rates are positive.  Hence a contradiction and the conclusion that the linear term cannot be 0 when $R_0=1$, and  $0$ is a simple eigenvalue of $Df(x_0)$.
\subsection{Proof of Lemma 1}
\label{lem1proof}
First note that the last three components of $v$ are 0.  This follows from the fact that
\[
J_4 = \begin{pmatrix} \mu_s &0&0\\0& \widetilde{\mu}_s&0\\ 0&0&\mu_r \end{pmatrix}
\]
is invertible.
Because
\[
	f = \frac{\mu + 1}{\zeta},
\]
the only nonzero derivatives in the expression for $b$ are in the $I$ and $\widetilde{I}$ compartments, since these are the only ones that involve $\mu$.
As the last three components of $v$ are 0, we have
\[
	b = v_1\left(\sum_{k=1}^jw_k\frac{\partial f_1}{\partial I_k \partial \mu} + w_{j+1} \frac{\partial f_1}{\partial \widetilde{I}\partial \mu}
		+ w_{j+1}\frac{\partial f_1}{\partial S \partial \mu}  + w_{j+3}\frac{\partial f_1}{\partial \widetilde{S}\partial \mu}  +
	w_{j+4}\frac{\partial f_1}{\partial R \partial \mu} \right)
\]
\[
	+ v_{j+1} \left(\sum_{k=1}^jw_k\frac{\partial f_{j+1}}{\partial I_k \partial \mu} + w_{j+1} \frac{\partial f_{j+1}}{\partial \widetilde{I}\partial \mu}
		+ w_{j+2}\frac{\partial f_{j+1}}{\partial S \partial \mu}  + w_{j+3}\frac{\partial f_{j+1}}{\partial \widetilde{S}\partial \mu}  +
	w_{j+4}\frac{\partial f_{j+1}}{\partial R \partial \mu} \right)
\]
Taking derivatives and evaluating at the DFE gives that
\[
	b = v_1(0 + w_{j+1}\frac{c_v}{\zeta} + 0 + 0 + 0) + v_{j+1}\left(\sum_{k=1}^j w_k \frac{c\overline{S}_v}{\overline{S}\zeta} + 0 + 0 + 0 + 0 \right)
\]
\[
	= v_1w_{j+1} \frac{c_v}{\zeta} + \frac{v_{j+1}c\overline{S}_v}{\overline{S}\zeta} \sum_{k=1}^j w_k
\]
We need to show that this is nonzero. First we claim that at least one of the $w_{i}$ are nonzero.  Suppose not, and that we have $w_1 = \ldots = w_{j+1} = 0$.
Then since $v_{j+2} = v_{j+3} = v_{j+4} = 0$,
\[
	vw = \sum_{i=1}^{j+4} v_iw_i = 0
\]
which is a contradiction.  Next we claim that $v_1 \neq 0$ or $v_{j+1} \neq 0$.  Suppose to the contrary that $v_1 = v_{j+1} = 0$. Then because
\[
	(v_1, \ldots, v_{j+1}, 0,0,0) Df(x_0) = 0
\]
it follows that
\[
	(0, v_2, \ldots, v_j, 0,0,0,0) Df(x_0) = 0.
\]
\[
 	\Rightarrow (v_2, \ldots, v_j) \begin{pmatrix} \alpha_1 & -\alpha_2-\mu_2 & \ldots & 0 & 0\\
	\vdots & \ddots & \ddots & \vdots & \vdots\\
	0 & 0 & \vdots & -\alpha_{j-1} - \mu_{j-1} & 0\\
0 &0& \vdots & \alpha_{j-1} & -\gamma-\mu_j \end{pmatrix} = 0
\]
The last column shows that $v_j = 0$, so the expression further reduces to
\[
	(v_2, \ldots, v_{j-1}) \begin{pmatrix} \alpha_1 & -\alpha_2-\mu_2 & \ldots & 0 \\
	\vdots & \ddots & \ddots & \vdots\\
	0 & 0 & \vdots & -\alpha_{j-1} - \mu_{j-1}  \end{pmatrix} = 0
\]
Again, the last column implies that $v_{j-1} = 0$, and so on.  This means  $v=0$, but $vw=1$, which is a contradiction.  Then because all the terms
in $b$ are nonnegative, with some of them being nonzero, we arrive at $b \neq 0$.

To show that $a>0$ note that
\[
	J_3 = \begin{pmatrix} 0&\ldots &0&fc_v\\ \frac{fc\overline{S}_v}{\overline{S}} & \ldots & \frac{fc\overline{S}_v}{\overline{S}} &0\\
0 &\ldots& \gamma & 0 \end{pmatrix}
\]
and
\[
J_4^{-1} = \begin{pmatrix} \frac{1}{\mu_s} & 0 & 0\\ 0 & \frac{1}{\widetilde{\mu}_s} & 0\\ 0 & 0 & \frac{1}{\mu_r} \end{pmatrix}
\]
Thus
\[
	-J_4^{-1} J_3 = \begin{pmatrix} 0&\ldots &0&\frac{fc_v}{\mu_s}\\ \frac{fc\overline{S}_v}{\widetilde{\mu}_s\overline{S}}
					 & \ldots & \frac{fc\overline{S}_v}{\widetilde{\mu}_s\overline{S}} &0\\
0 &\ldots& \frac{\gamma}{\mu_{r}} & 0 \end{pmatrix}
\]
We can list the nonzero elements of this matrix : $\varepsilon_{1(j+1)}, \varepsilon_{21}, \ldots, \varepsilon_{2j}, \varepsilon_{3j}$.
The second derivatives of the infected components with respect to an infected variable are all zero, since one differentiation removes all of that
infected variable.  When differentiating with respect to an infected variable and an uninfected variable, the derivatives of the $I_2$ through $I_j$
components will be zero as they contain no uninfected variables.  The nonzero derivatives are
\[
	\frac{\partial^2 f_1}{\partial \widetilde{I}\partial S} = \frac{fc_v}{\overline{S}}
\]
\[
	\frac{\partial^2 f_{j+1}}{\partial I_k \partial \widetilde{S}} = \frac{fc}{\overline{S}}
\]
So then
\[
	a= \sum_{i,j,k = 1}^{j+1} v_iw_jw_k \left( 0 - \frac{f^2c_v^2}{\overline{S}\mu_s} - \sum_{l=1}^j \frac{f^2c^2\overline{S}_v}{\overline{S}^2 \widetilde{\mu}_s}
	\right)
\]
\[
= - \sum_{i,j,k = 1}^{j+1} v_iw_jw_k \left( \frac{f^2c_v^2}{\overline{S}\mu_s}  + \frac{f^2c^2\overline{S}_v}{\overline{S}^2 \widetilde{\mu}_s} ]\right)
\]
As we have already shown, the $v_i, w_i \geq 0$, and since the parameters are positive $a<0$.
\end{document}